\documentclass[11pt]{article}
\usepackage{geometry}
\usepackage{amsmath,amssymb,amsthm,xcolor}
\usepackage{tikz}
\usepackage{comment}
\usepackage{authblk}

\newcommand{\bN}{\mathbb{N}}

\newcommand{\bR}{\mathbb{R}}
\newcommand{\trap}{\mathcal{T}}
\newcommand{\Riem}{\operatorname{Riem}}
\newcommand{\Ric}{\operatorname{Ric}}

\newtheorem{thm}{Theorem}[section]
\newtheorem{prop}[thm]{Proposition}
\newtheorem{lem}[thm]{Lemma}
\newtheorem{conj}[thm]{Conjecture}

\newtheorem*{exintro}{Example {\ref{ex:AdS}}}
\theoremstyle{definition}
\newtheorem{defn}[thm]{Definition}
\newtheorem{exam}[thm]{Example}
\newtheorem{rem}[thm]{Remark}

\title{On the affine parametrization of null geodesics in low regularity}
\author[1]{Sa\'ul Burgos}
\author[2,3]{Leonardo Garc\'ia-Heveling}
\author[3]{Melanie Graf}
\affil[1]{Departamento de Geometr\'ia y Topolog\'ia, Facultad de Ciencias and IMAG, Universidad de Granada, Spain}
\affil[2]{Faculty of Mathematics, University of Vienna, Austria}
\affil[3]{Department of Mathematics, University of Hamburg, Germany}

\date{\today}

\begin{document}

\maketitle

\begin{abstract}
    In low-regularity spacetimes and Lorentzian length spaces, achronal causal curves play the role of null (pre-)geodesics. Because of the lack of a geodesic equation, they do not come with a canonical parametrization. In this context, we discuss a notion of affine parametrization via limits of affinely parametrized timelike geodesics. However, we point out a major drawback: an example where this approximation procedures gives a non-unique result, in a way that even completeness or incompleteness of the limit null geodesic is not well-defined. This example involves discontinuous gluing of two Lorentzian metrics across a null hypersurface to give a well-behaved Lorentzian length space and as such is, just like the parametrization problem itself, manifestly Lorentzian. In view of this, we explore possibilities for a Penrose-type singularity theorem using timelike geodesics.
\end{abstract}

\section{Introduction}

Null geometry plays a very important role in general relativity, as exemplified by two ce\-lebrated results: Penrose's singularity theorem infers the formation of singularities from gravitational collapse by establishing the existence of an incomplete null geodesic \cite{PenSing}. Hawking's area theorem encodes the second law of black hole thermodynamics as a statement about the area of cross sections of the event horizon, which is a null hypersurface \cite{HawArea}. These complement other important results, which are centered around timelike geodesics and spacelike hypersurfaces, such as Hawking's singularity theorem \cite{HawSing}.

A current trend in Lorentzian geometry is to consider results like the ones above under lower differentiability of the metric tensor, or even in synthetic frameworks that do away with the manifold structure altogether \cite{CMMsynth,CaMo,MelanieC1,Kett,KuSa,McCnull,SaSt}. In this paper, we adopt the definition of Lorentzian length spaces from Kunzinger and S\"amann \cite{KuSa} (in essence, metric spaces equipped with chronological and causal relations and a time separation function), but the issues that we raise are relevant to other approaches as well.

  In general, results involving timelike reasoning are more amenable to the non-smooth setting than those involving null geometry as they are more similar to their Riemannian counterparts. In both the timelike and the null case, the concept of a (pre-)geodesic can be defined as a causal (local) length maximizer, which in the null case is a (locally) achronal causal curve. An affine parametrization of a timelike geodesic can then be defined as a parametrization proportional to Lorentzian arclength, requiring no additional regularity of the space. The affine parametrization of a null geodesic, on the other hand, does not admit a straightforward characterization without invoking the geodesic equation. The latter is well-posed for spacetimes $(M,g)$ with $g \in C^{1,1}$. One can go down to $g \in C^1$ if willing to accept non-uniqueness of solutions \cite{MelanieC1}, and even $g \in C^{0,1}$ if considering solutions in the sense of Filippov \cite{SaSt}. In any of these regularities one can determine a parametrization in which local maximizers will become solutions of the geodesic equation (cf.\ \cite[Thm.\ 1.1]{LangeLytchakSaemann}), but for metrics below $C^{1,1}$ there may arise further solutions to the geodesic equation which are not locally length maximizing. 
  For Lorentzian length spaces, however, there is no analogue of the geodesic equation at all.

In synthetic settings, Hawking's singularity theorem for measured Lorentzian length spaces was already proven in the work of Cavalletti and Mondino, where these spaces were first introduced \cite{CaMo}. Their result is based on an optimal transport formulation of the strong energy condition,
\begin{equation*}
    \Ric(X,X) \geq 0 \quad \text{for all timelike } X \in TM.
\end{equation*}
The weaker null energy condition appearing in Penrose's singularity theorem and Hawking's area theorem,
\begin{equation*}
    \Ric(X,X) \geq 0 \quad \text{for all null } X \in TM,
\end{equation*}
has been given three different optimal transport characterizations by McCann \cite{McCnull}, Ketterer \cite{Kett}, and Cavalletti, Manini, Mondino \cite{CMMsynth,CMMsmooth}. Of these, only the last one has so far provided a version of Penrose's theorem for Lorentzian length spaces. None of the three, however, solves the issue of defining the affine parametrization of a null geodesic (in the sense of achronal causal curve) in a (measured) Lorentzian length space. In particular, this issue is bypassed in \cite{CMMsynth} by encoding the parametrization in a ``gauge function" that has to be specified as part of the data of a synthetic null hypersurface. The question remains if there is a canonical way to construct such a gauge function using only the structure of the ambient Lorentzian length space.

Our paper deals precisely with this question. On smooth spacetimes, one can view (affinely parametrized) null geodesics as the limits of timelike ones. This seems a promising way to proceed in the synthetic setting, and is adopted for geodesics with endpoints in \cite{McCnull}. In the context of Penrose's singularity theorem, however, it is essential to deal also with null geodesics without an endpoint. Their behavior is much more subtle, as shown by the following example.

\begin{exintro}
    A globally hyperbolic, regularly localizable Lorentzian length space $X$ with 
    timelike curvature bounded above by $-1$ (in the sense of  of \cite[Def 3.2]{CurvBoundsLLS}), containing an achronal causal curve $\gamma \colon [0, \infty) \to X$ such that:
    \begin{enumerate}
        \item In this parametrization $\gamma$ can be approximated uniformly on compact sets by length-maximizing timelike geodesics.
        \item $\gamma$ admits a reparametrization $\sigma \colon [0,a) \to X$ with $a < \infty$, such that $\sigma$ too can be approximated uniformly on compact sets by length-maximizing timelike geodesics.
    \end{enumerate}
\end{exintro}

In this example, the approximation procedure does not yield a unique parametrization (more precisely, not unique up to affine reparametrization, which is the most we could expect), but even worse: the completeness or incompleteness of $\gamma$ is not well-defined. The example comes in the form of a spacetime where the metric has been multiplied by a discontinuous conformal factor. As already stated above, the space $X$ enjoys many good properties, so in the world of Lorentzian length spaces it is not apparently pathological. The only perhaps desirable property that we are not able to establish is a lower curvature bound: To the contrary we show that it does not have curvature bounded below by 0, which due to the curvature bounds of the two glued spacetimes would be the natural candidate, leading us to suspect that it not have any lower bound on the curvature at all. 


In view of our example, one wonders if it is possible to avoid the issue of affine parametrization for null geodesics altogether and instead prove a version of Penrose's theorem using timelike geodesics instead of null ones. Indeed, what sets the theorem apart is the assumption of a trapped surface, interpreted as gravitational collapse inside a black hole (as opposed to an expanding Cauchy surface in Hawking's theorem, interpreted as a cosmological condition). The conclusion of Penrose's theorem being null (rather than timelike) geodesic incompleteness is a consequence of the techniques employed in the proof, but in fact less desirable, given that the affine length of timelike geodesics has a direct physical interpretation as proper time.

Following these ideas, in Section~\ref{sec:timelikepenrose} we obtain a proof of the classical Penrose theorem on smooth spacetimes, using only approximation by timelike geodesics and the reformulation of the null energy condition as variable lower timelike Ricci curvature bounds of McCann in 
\cite{McCnull}, and thus avoiding direct reasoning with null geodesics. To our knowledge, this is the first application of this reformulation of the null energy condition. 
We also obtain a purely timelike version of Penrose's theorem, predicting timelike incompleteness from a trapped surface and a condition on timelike Ricci and sectional curvatures, improving upon related results obtained in \cite{LeoPRD}.

\section{Null geodesics as limits of timelike ones}

In this section, we explore how one can define the parametrization of null geodesics as limits of timelike ones, especially with regards to obtaining a notion of completeness or incompleteness.
Since the theory of Lorentzian pre-length spaces is by now well established, we refer to \cite{KuSa} for the fundamentals. 

Throughout this section, $(X, d, \ll, \leq, \tau)$ is a globally hyperbolic regular Lorentzian length space \cite[Def.~3.22]{KuSa}. In this context, we call a causal curve $\gamma \colon I \to X$ a \emph{timelike geodesic} if it is timelike and $\tau(\gamma(s),\gamma(t)) = L_\tau(\gamma \vert_{[s,t]})$ for all $s < t \in I$ (if $I$ is a compact interval $[a,b]$, it suffices to consider $s=a$, $t=b$). Similarly, we will call $\gamma$ a \emph{null geodesic} if it is achronal (which automatically implies that it is maximizing with vanishing length). Regularity of $X$ means that maximizers always have a well-defined causal character, so every maximizing causal curve will fit into one of the two types. Note that our definition of geodesic is parametrization invariant. We will use the term \emph{affinely parametrized timelike geodesic} for timelike geodesics that are parametrized proportional to Lorentzian arclength, and \emph{affinely parametrized null geodesic} for null geodesics satisfying some tentative definition of affine parametrization. 

\subsection{Approximation results}

We start by showing that a null geodesic $\gamma$ can be approximated by timelike geodesics $\gamma_n$. This is, of course, a necessary condition in order to define an affine parametrization of $\gamma$ via a limiting procedure. At this stage, however, we are not claiming that the $\gamma_n$ can be chosen affinely parametrized, since our proof requires $(\gamma_n)_n$ to be an equi-Lipschitz sequence with respect to the background distance $d$, and these two conditions are incompatible in general (more on that below). First we deal with null geodesics having two endpoints.

\begin{prop} \label{prop:approx}
     Let $p,q \in X$ and suppose that there is a unique (up to parametrization) null geodesic $\gamma \colon [0,1] \to X$ from $p$ to $q$. Then, a suitable reparametrization of $\gamma$ is the uniform limit of a sequence of timelike geodesics $\gamma_n \colon [0,1] \to X$. 
\end{prop}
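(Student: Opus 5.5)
We approximate $q$ from the timelike future by points $q_n \to q$ with $p \ll q_n$. We then take maximizing timelike geodesics from $p$ to $q_n$, which we reparametrize so that they are equi-Lipschitz with respect to $d$. Finally we extract a uniformly convergent subsequence whose limit is forced, by uniqueness, to be a reparametrization of $\gamma$.

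\emph{Choice of endpoints and geodesics.} Since $X$ is a Lorentzian length space, it is in particular strongly localizable, so every point lies in the closure of its timelike future. Hence there is a sequence $q_n \to q$ with $q \ll q_n$, and by push-up $p \ll q_n$. Global hyperbolicity gives, via the Avez--Seifert type result of \cite{KuSa}, a maximizing causal curve from $p$ to $q_n$. It has length $\tau(p,q_n)>0$, so by regularity it is timelike, i.e.\ a timelike geodesic $\tilde\gamma_n$.

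\emph{Compactness.} All $\tilde\gamma_n$ lie in $J^+(p)\cap J^-(q_1')$ for some fixed $q_1'$ with $q_n \le q_1'$; for instance, choose $q_n \in I^-(q')\cap I^+(q)$ for a fixed $q' \gg q$. This causal diamond is compact by global hyperbolicity. Causal curves in a compact subset of a globally hyperbolic Lorentzian length space have uniformly bounded $d$-length (non-total imprisonment, \cite[Thm.~3.28/Lemma~3.?]{KuSa}). We therefore reparametrize each $\tilde\gamma_n$ proportionally to $d$-arclength on $[0,1]$, which gives curves $\gamma_n$ that are $L$-Lipschitz for a uniform $L$. Arzelà--Ascoli then yields a subsequence converging uniformly to some $\gamma_\infty\colon[0,1]\to X$ from $p$ to $q$. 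By the limit curve theorem \cite[Thm.~3.7]{KuSa}, $\gamma_\infty$ is causal, and by upper semicontinuity of $L_\tau$,
\[
L_\tau(\gamma_\infty)\ \ge\ \limsup_n L_\tau(\gamma_n)=\limsup_n \tau(p,q_n)\ \ge\ \tau(p,q),
\]
where the last inequality uses lower semicontinuity of $\tau$. Since $\tau(p,q)\ge L_\tau(\gamma_\infty)$ always holds, $\gamma_\infty$ is maximizing from $p$ to $q$.

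\emph{Identification and the main obstacle.} The delicate point is concluding that $\gamma_\infty$ is a reparametrization of the null geodesic $\gamma$. We need $\tau(p,q)=0$, which is implicit since $\gamma$ is achronal from $p$ to $q$ and hence $p\not\ll q$. Then $\gamma_\infty$ is a maximizer of length $0$, so by regularity it is null, i.e.\ achronal, and uniqueness forces it to coincide with $\gamma$ up to parametrization.

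Two further checks are needed. First, $\gamma_\infty$ must be non-constant; this holds because $p\neq q$. Second, $\gamma_\infty$ may be only a weakly monotone reparametrization, possibly with constant pieces. To handle this, I would note that the $d$-arclength parametrization passes to the limit well enough: limits of curves parametrized by $d$-arclength need not be $d$-arclength parametrized, but they are injective unless they stay constant on intervals. If such constancy occurs, the statement's ``suitable reparametrization'' still covers it, provided reparametrization is allowed to be monotone. Otherwise I would reparametrize the $\gamma_n$ instead by a time function, e.g.\ a continuous time function $T$ from global hyperbolicity, normalized to $[0,1]$. Strict increase of $T$ along causal curves then makes the limit strictly monotone in $T$ and hence a genuine reparametrization of $\gamma$. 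Equicontinuity in this parametrization is what I expect to be the hardest point; I would try to derive it from the uniform $d$-Lipschitz bound combined with uniform continuity of $T$ on the compact diamond.
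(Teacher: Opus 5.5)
Your proposal is correct and follows essentially the same route as the paper: approximate $q$ by points $q_n$ with $q \ll q_n$ inside a fixed compact causal diamond, take maximizing timelike geodesics from $p$ to $q_n$ parametrized proportionally to $d$-arclength so that non-total imprisonment gives a uniform Lipschitz bound, and conclude with the limit curve theorem and uniqueness of $\gamma$ (for the choice of $q_n$ you only need the localizability built into the definition of a Lorentzian length space, not strong localizability as you wrote). The extra steps are harmless but unnecessary: since $p \not\ll q$, push-up already makes every causal curve from $p$ to $q$ achronal, so no semicontinuity or regularity argument is needed to identify the limit, and the paper likewise accepts the limit curve as a reparametrization of $\gamma$ without addressing strict monotonicity, so the time-function detour can be dropped.
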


\begin{proof}
    By the localizability property of Lorentzian length spaces, there is a sequence $q_n \to q$ such that $q \ll ... \ll q_n \ll q_{n-1} \ll ... \ll q_0$. By the push-up property, $q_n \in I^+(p)$, and by global hyperbolicity and regularity, there is a maximizing timelike geodesic $\gamma_n \colon [0,1] \to X$ from $p$ to $q_n$ (parametrized proportional to $d$-arclength). By global hyperbolicity, and since the $\gamma_n$ are contained in the compact causal diamond $J(p,q_0)$, the lengths of the $\gamma_n$ are uniformly bounded above by a constant $L>0$. Then the $\gamma_n$ are $L$-Lipschitz, and the limit curve theorem \cite[Thm.~3.7]{KuSa} tells us that a subsequence of $(\gamma_n)_n$ converges to a causal curve with endpoints $p$ and $q$, which by assumption (when $\gamma$ is unique) can only be $\gamma$, up to parametrization.

\end{proof}

Next, we deal with curves that have only one endpoint, which is necessary in order to talk about geodesic (in)completeness.

\begin{prop} \label{prop:approxx}
    Suppose that $X$ is past null non-branching and let $\gamma \colon [0,\infty) \to X$ be a null geodesic. Then there exists a sequence of timelike geodesics $\gamma_n \colon [0,n] \to X$ such that $(\gamma_n)_n$ converges uniformly on compact subsets to a suitable reparametrization of $\gamma$.
\end{prop}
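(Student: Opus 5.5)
We reduce to Proposition~\ref{prop:approx} on ever longer initial segments of $\gamma$, then extract a diagonal subsequence. Fix times $t_1<t_2<\dots\to\infty$ and put $q_k:=\gamma(t_k)$. Since $\gamma$ is achronal, each restriction $\gamma\vert_{[0,t_k]}$ is a null geodesic from $p:=\gamma(0)$ to $q_k$.

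The first and main step is to show that this null geodesic is the \emph{unique} causal maximizer from $p$ to $q_k$, up to parametrization; this is where past null non-branching enters. Suppose $\sigma$ were another null geodesic from $p$ to $q_k$. Since $\gamma$ is achronal, the concatenation of $\sigma$ with $\gamma\vert_{[t_k,t_{k+1}]}$ is a causal curve from $p$ to $q_{k+1}$ of zero length. If it left $\gamma$, it would branch from $\gamma$ in the past direction at some point of $\gamma\vert_{[t_k,t_{k+1}]}$. One has to check that this concatenation is still achronal. I expect this to be routine: if two of its points were chronologically related, push-up and concatenation with $\gamma$ would give $p\ll q_{k+1}$, contradicting achronality of $\gamma$. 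Past null non-branching then forces $\sigma=\gamma\vert_{[0,t_k]}$ up to parametrization. This is the step I expect to be the main obstacle, because the precise definition of past null non-branching must match this configuration: two null geodesics that agree on a final segment must agree everywhere.

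Given uniqueness, Proposition~\ref{prop:approx} applies on each segment. After rescaling the domain, it yields timelike geodesics from $p$ to points near $q_k$ that converge uniformly to a reparametrization of $\gamma\vert_{[0,t_k]}$. The parametrization is the delicate point. In that proof the curves are parametrized proportionally to $d$-arclength with a uniform Lipschitz bound, so the limit parametrization depends on $k$. To get one parametrization on $[0,\infty)$, I would instead fix from the start a reparametrization $\tilde\gamma$ of $\gamma$ by $d$-arclength; this is unbounded, or else one first treats the bounded case, and uses that $\gamma$ is inextendible as a geodesic on $[0,\infty)$. I would parametrize each approximant $\gamma_n$ by $d$-arclength as well, so the whole family is $1$-Lipschitz.

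For each $n$, choose $k$ large with $t_k\ge n$, and then a timelike geodesic $\gamma_n$ from $p$ ending near $q_k$, with $\sup_{[0,n]} d(\gamma_n,\tilde\gamma)<1/n$ on the common domain. If the approximants have $d$-length at least $n$, restrict them to $[0,n]$. This is possible because the uniform convergence in Proposition~\ref{prop:approx} is for curves parametrized proportionally to $d$-arclength, with the lengths converging to $L^d(\tilde\gamma\vert_{[0,t_k]})$. For the lengths I would use the limit curve theorem \cite[Thm.~3.7]{KuSa} together with uniqueness, since the length of the limit must equal the limit of the lengths. Equivalently, apply the limit curve theorem directly to the $1$-Lipschitz family on each $[0,m]$, and use uniqueness of maximizers on segments to identify every subsequential limit with $\tilde\gamma\vert_{[0,m]}$. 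A diagonal argument over $m$ then gives convergence uniformly on compact subsets of $[0,\infty)$.
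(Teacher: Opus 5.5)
Your uniqueness step is exactly the paper's argument and is fine. It concatenates a competitor $\sigma$ with $\gamma\vert_{[t_k,t_{k+1}]}$, observes that the result is achronal because $p\not\ll q_{k+1}$, and invokes past null non-branching. Since $p\not\ll q_k$, every causal curve from $p$ to $q_k$ is already a null geodesic, so restricting to null competitors loses nothing.

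The gap is in your parametrization step. You claim that the $d$-lengths of the approximating timelike geodesics converge to $L^d(\tilde\gamma\vert_{[0,t_k]})$, because ``the length of the limit must equal the limit of the lengths''. This is false. Under uniform convergence, $d$-length is only lower semicontinuous, and the background metric of a Lorentzian length space is not tied to the causal structure closely enough to improve this. For instance, on $\bR^{1,1}$ replace the Euclidean metric by the bi-Lipschitz equivalent metric $d(x,y)+\vert f(x)-f(y)\vert$. Here $f$ is Lipschitz, vanishes on a null line, and oscillates just off it at a scale comparable to the distance from it. Timelike approximants then carry a definite amount of excess $d$-length.

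This breaks your construction in three ways:
\begin{itemize}
\item The $d$-arclength reparametrizations of the approximants, cut off at $[0,n]$, need not be uniformly close to $\tilde\gamma\vert_{[0,n]}$.
\item In your alternative argument, the subsequential limits of the $1$-Lipschitz family are only of the form $\tilde\gamma\circ\phi$ with $\phi(s)=L^d(c\vert_{[0,s]})\le s$, not $\tilde\gamma$ itself. You would still have to show that $\phi$ is unbounded.
\item $L^d(\gamma)=\infty$ is not available. With $d$ Euclidean, the null geodesic of Example~\ref{ex:AdS} has finite $d$-length to the future, even though it is inextendible (and inextendibility is not a hypothesis anyway). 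In that case the approximants need not have $d$-length $\ge n$, so $\gamma_n\colon[0,n]\to X$ cannot be obtained by restriction. The ``bounded case'' you defer is never treated.
\end{itemize}

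None of this is needed, since the statement only asks for some reparametrization. The paper keeps $\gamma$'s own parameter. For each $n$ it applies Proposition~\ref{prop:approx} to $\gamma\vert_{[0,n]}$ and picks a single timelike geodesic $\gamma_n\colon[0,n]\to X$ with $\gamma_n(0)=\gamma(0)$ and $d(\gamma(s),\gamma_n(s))\le 1/n$ on $[0,n]$. Uniform convergence on each $[0,m]$ is then immediate for $n\ge m$, with no diagonal argument and no control of $d$-lengths.

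The paper leaves one step implicit: passing from convergence to $\gamma\circ\phi$, with $\phi$ a nondecreasing continuous surjection, to closeness in the original parameter. You can handle it by composing the approximants with a piecewise linear increasing homeomorphism $\alpha$ such that $\phi\circ\alpha$ is uniformly close to the identity, and then using uniform continuity of $\gamma$ on $[0,n]$. This is allowed because geodesics here are parametrization invariant.
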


Here by past null non-branching we mean that for any two null geodesics $\gamma,\sigma \colon [0,1] \to X$ it holds:
\begin{equation*}
    \gamma(s) = \sigma(s) \quad \forall s \in \big[\tfrac{1}{2},1\big] \implies \gamma([0,1]) \subseteq \sigma([0,1]) \ \text{ or } \ \sigma([0,1]) \subseteq \gamma([0,1]).
\end{equation*}
The choice of the intervals is arbitrary, and since we did not assume a specific parametrization, all that this really means is that if the images of $\gamma$ and $\sigma$ overlap on a segment, then they coincide (up to one of them being possibly longer). Note that \cite[Def.~5.6]{CMMsynth} provides a related definition of null non-branching tailored to null geodesics contained in a synthetic null hypersurface.

\begin{proof}
    First, we prove that for all $n \in \bN$, $\gamma \vert_{[0,n]}$ is the unique causal curve from $\gamma(0)$ to $\gamma(n)$. Suppose that there was another causal curve $\sigma$ with the same endpoints. Then we can concatenate $\sigma$ with $\gamma \vert_{[n,n+1]}$ to obtain a causal curve $\sigma * \gamma \vert_{[n,n+1]}$ from $\gamma(0)$ to $\gamma(n+1)$, which is necessarily a null geodesic, given that $\gamma(0)$ and $\gamma(n+1)$ are not timelike related. But then $\gamma\vert_{[0,n+1]}$ and $\sigma * \gamma \vert_{[n,n+1]}$ contradict the past null non-branching assumption.
    
    We can now apply Proposition~\ref{prop:approx} to $\gamma \vert_{[0,n]}$ (affinely reparametrizing between $[0,1]$ and $[0,n]$ when needed), to conclude that for every $n \in \bN$, there is a timelike geodesic $\gamma_n \colon [0,n] \to X$ with $\gamma_n(0) = \gamma(0)$ and such that $d(\gamma(s),\gamma_n(s)) \leq \frac{1}{n}$ for all $s \in [0,n]$. We have thus constructed the desired sequence $(\gamma_n)_n$.
\end{proof}

Finally, one can also consider 
geodesics with no endpoints, but we shall omit this for brevity.

\begin{rem}[Smooth case]
 First of all, note that in the case of a smooth spacetime, a solution of the geodesic equation is a geodesic in the sense defined above only if it is also maximizing. Nonetheless, one could prove Propositions~\ref{prop:approx} and \ref{prop:approxx} using well-posedness of the initial value problem for the geodesic equation, thus obtaining even a $C^1$ approximation (as is done below to prove the similar Proposition~\ref{prop:approxtrap}). Global hyperbolicity ensures the relevant maximization properties of geodesics via the tools found in \cite[Chap.~9]{BEE} (in particular, Prop.~9.33 therein). On the other hand, if one does not insist on the geodesics being maximizing, then such a proof is possible without assuming global hyperbolicity.
\end{rem}

Let us now turn to discussing the issue of affine parametrizations. Ideally, any generalization of affine parametrizations to geodesics in Lorentzian length spaces satisfies two core principles (in addition to coinciding with the usual notion in the smooth setting):
\begin{enumerate}
    \item Existence: Any causal geodesic admits an affine parametrization.
    \item Uniqueness: Affine parametrizations of a given geodesic are unique up to affine reparametrizations.
\end{enumerate}
For timelike geodesics on Lorentzian length spaces, one defines that an affine parametrization is one that is proportional to $\tau$-arclength, in analogy to the smooth case. It is well-known that this satisfies the desired existence and uniqueness properties (by Proposition~3.34 and Lemma~2.28 in \cite{KuSa}, respectively).

The above propositions suggest that we could define the affine parametrization of a null geodesic as follows: For a null geodesic $\gamma$, an affine parametrization is a parametrization obtained as a limit of any sequence of affinely parametrized timelike geodesics $(\gamma_n)_n$ converging to $\gamma$. We know that given an initial parametrization of $\gamma$, we can find an approximation of $\gamma$ by timelike geodesics $\gamma_n$ (not necessarily affinely parametrized). We can then reparametrize the $\gamma_n$ proportional to $\tau$-arclength to obtain a new sequence $\tilde\gamma_n$, and hope that the $\tilde\gamma_n$ converge to a reparametrization of $\gamma$ (which we would then call an affinely parametrized null geodesic). The $\tilde \gamma_n$, however, are not guaranteed to be locally Lipschitz (see \cite[Sec.~3.7]{KuSa} and \cite[Cor.~6]{McCnull}). Hence we cannot apply the usual limit curve theorem to the $\tilde\gamma_n$. So even existence might be non-trivial. We refrain from trying to solve this technical issue, since in the next section we will see that even when this problem does not appear, there are issues with {\em uniqueness} causing a more conceptual problem relating to (in)completeness. Note that, in any case, on smooth spacetimes the above procedure does recover the affine parametrization of the null geodesic (one can deduce this from well-posedness of the initial value problem for the geodesic equation).

Given our example it becomes reasonable to ask if one will have to give up on either existence or uniqueness of affine parametrizations for null geodesics (defined as locally achronal curves) in Lorentzian length spaces. Another possibility would be to restrict oneself to only consider those locally achronal curves which admit a suitable parametrization as ``null geodesics'' in the first place (cf.\ Remark \ref{rem:McCannnullgeods} below). Then this could either be further restricted to only allow curves for which this parametrization is suitably unique (which does not seem desirable, as it excludes the curve from our example, which we believe should be considered a null geodesic) or to treat parametrizations which differ by more than an affine change of variables as two {\em distinct} parametrized geodesics (i.e., the curve from our example would give rise to two parametrized geodesics with different completeness properties). It is at this point unclear what influence either of these approaches would have on a synthetic Penrose theorem.


\begin{rem}[Connection to \cite{McCnull}]\label{rem:McCannnullgeods}
 In McCann's terminology, the space $\operatorname{CGeo}^\ell(M)$ of \emph{causal $\ell$-geodesics} is defined essentially as described above. More precisely, $\operatorname{CGeo}^\ell(M)$ is the closure of the space $\operatorname{TGeo}^\ell(M)$ of \emph{timelike $\ell$-geodesics},\footnote{Note that being a timelike $\ell$-geodesic includes being parametrized by Lorentzian arc length.} and an element of $\operatorname{CGeo}^\ell(M)$ that is not in $\operatorname{TGeo}^\ell(M)$ is called a \emph{lightlike $\ell$-geodesic} (see also \cite[Rem.~9]{McCnull}). To our knowledge, however, it is not known if every achronal causal curve admits a reparametrization that is contained in $\operatorname{CGeo}^\ell(M)$, or if any pair of causally related points can be joined by a curve in $\operatorname{CGeo}^\ell(M)$. We expect that the same difficulty related to (non-)Lipschitzianity appears.
\end{rem}

\subsection{Null geodesic (in)completeness}

As discussed above, it is tempting to try to define the affine parametrization of a null geodesic via a limiting procedure. While this coincides with the usual affine parametrization in the smooth case, which is unique up to translations and linear scalings, this uniqueness is not at all clear in the non-smooth setting as different approximating sequences may lead to fundamentally different parametrizations. Even worse, it is not even clear if completeness or incompleteness (i.e.\ whether the parameter range is bounded or unbounded) depends on the approximating sequence. In fact our main example, Example~\ref{ex:AdS} below, shows that it does. 

Before we get to that example, let us first remark with a simple example that 
even in the smooth case, there is no straightforward relationship between the limit of the lengths of the approximating timelike geodesics provided by Proposition~\ref{prop:approxx} and the (in)completeness of the null geodesic.

\begin{exam}
    Consider in $2$-dimensional Minkowski spacetime $(M,g)$ with coordinates $(t,x)$ the null geodesic segment $\gamma \colon [0, \infty) \to M,\ s \mapsto (s,s)$. Consider the sequences of timelike geodesics
    \begin{equation*}
        \gamma_n \colon [0,b_n) \longrightarrow M,\quad s \longmapsto \left(s,s \sqrt{1-\frac{1}{n^2}}\right),
    \end{equation*}
    where we consider different options for $b_n$. Note that the segments $\gamma_n$ are maximizing, so their length is given by
    \begin{equation*}
        L_n = b_n^2 - \left(1 - \frac{1}{n^2}\right) b_n^2 = \frac{b_n^2}{n^2}.
    \end{equation*}
    Hence if we choose $b_n = n^a$, we have $L_n \to 0$ if $0 < a < 1$, $L_n = 1$ if $a = 1$, and $L_n \to \infty$ if $a>1$. Nonetheless, in all these cases $b_n \to \infty$, and the sequence $\gamma_n$ approximates the entire curve $\gamma$.
\end{exam}

This shows that trying to shortcut the discussion of (in-)completeness of null geodesics by simply considering lengths of approximating timelike geodesics will not work and one really has to solve the parametrization problem. This is where our next examples comes in.

\begin{figure}
    \centering
    \includegraphics[scale=5]{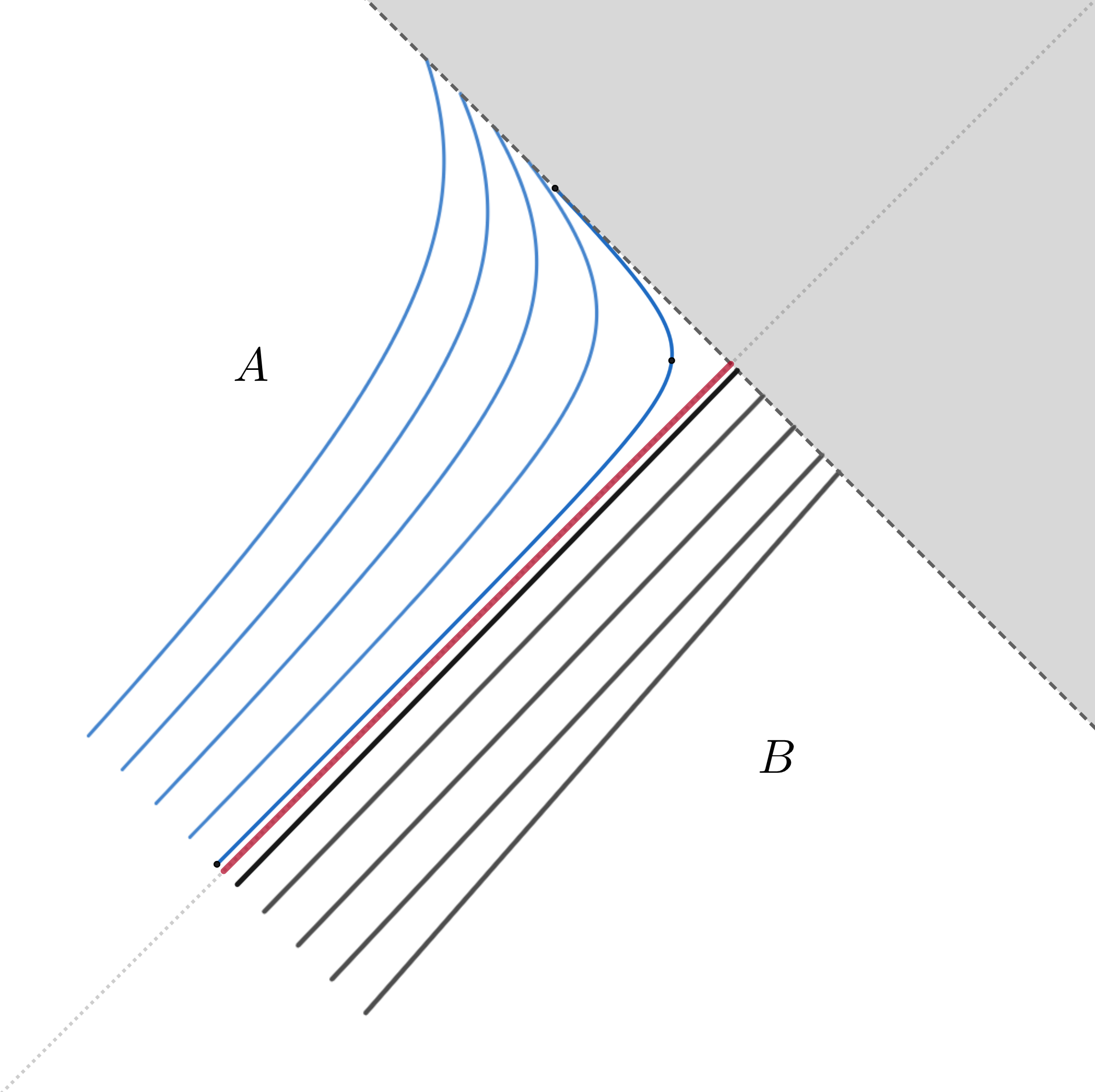}
    \caption{The space $X$ of Example~\ref{ex:AdS}, with the null geodesic $\sigma$ in red, and the approximating sequences from regions $A$ and $B$.}
    \label{fig:AdS}
\end{figure}

\begin{exam} \label{ex:AdS}
    Let $M := \{ (t,x) \mid t<-x \}$ and $g:=\Omega(-dt^2+dx^2)$, with discontinuous conformal factor
    \begin{equation*}
        \Omega := \begin{cases}
            1 &\text{if } \vert t \vert \geq \vert x \vert, \\
            \frac{1}{x^2} &\text{if } \vert t \vert < \vert x \vert.
        \end{cases}
    \end{equation*}
    Despite $\Omega$ being discontinuous, $g$ still induces the structure of a Lorentzian length space (see Lemma~\ref{lem:XisLLS} below). 
    
    We prove that the null geodesic $\sigma$ with image $\{ t=x \}$ can be given both a complete and an incomplete affine parametrization, depending on whether one chooses approximating affinely parametrized timelike geodesics lying above or below it.
    
    Notice that the region $A := \{ \vert t \vert < -x \}$ (where $\Omega \not\equiv 1$) is isometric to a patch of anti-de Sitter space \cite[Eqn.~(12)]{AdS}, and can be isometrically embedded in the spacetime $N = \bR^2$ with metric
    \begin{equation*}
        h := - e^{2y} dt^2 + dy^2.
    \end{equation*}
    The isometric embedding is given by $\phi : (t,x) \mapsto (t, -\ln(-x))$. To find the null geodesics $\gamma(s) = (t(s),y(s))$, we exploit the fact that $\partial_t$ is a Killing vector, so that it suffices to solve the following system
    \begin{equation*}
        \begin{cases}
            g(\dot\gamma,\partial_t) = - e^{2y} \dot{t} = -E, & \\
            g(\dot\gamma,\dot\gamma) = - e^{2y} \dot{t}^2 + \dot{y}^2 = 0, &
        \end{cases}
    \end{equation*}
    where $E>0$ for future-directed geodesics. The solution is given by
    \begin{equation*}
            t(s) = \frac{-1}{E(s-s_0)} + t_0, \qquad
            y(s) = \ln(\pm E(s-s_0)).
    \end{equation*}
    for constants $s_0,t_0$. Mapping back with $\phi^{-1}$, we obtain
    \begin{equation*}
            t(s) = \frac{-1}{E(s-s_0)} + t_0, \qquad
            x(s) = \frac{\pm 1}{E(s-s_0)}.
    \end{equation*}
    The solution thus exists for all $s>s_0$, and hence future directed null geodesics with $\dot{x}>0$ are future null geodesically complete in $(N,h)$.\footnote{We note that $(N,h)$ is not null geodesically complete as the half space coordinates $\left(\bR \times (-\infty,0),\frac{1}{x^2}(-dt^2+dx^2)\right)$ do not cover all of $AdS_2$ and in fact null geodesics will be incomplete as $x\to -\infty$.} It follows that we can parametrize $\sigma$ in $(M,g)$ as a future complete null geodesic, by approximating it with timelike geodesics in $A$ (note that $A$ is causally convex, so length maximizers in $A$ are still $\tau$-length maximizers in $M$). On the other hand, we can also approximate $\sigma$ with timelike geodesics in $B := \{ t < x \}$ (which is also causally convex), and then we obtain the future incomplete parametrization that $\sigma$ would have in Minkowski spacetime.
\end{exam}

Having found an example with an offending geodesic one might wonder if there is any a priori reason why such an example might be excluded for being ''pathological'' in some other way. However, as we will now see, despite the discontinuity in the metric this example is a very well behaved for a Lorentzian length space.

\begin{lem} \label{lem:XisLLS}
 The space $X$ in Example~\ref{ex:AdS} is a strongly regularly localizable Lorentzian length space. Moreover, it is globally hyperbolic and has timelike curvature bounded above by $-1$ in the sense of triangle comparison.
\end{lem}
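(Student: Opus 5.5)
The plan is to reduce almost everything to a change of coordinates in which the discontinuity becomes harmless. The key observation is that $g=\Omega\,\eta$ with $\eta=-dt^2+dx^2$ and $\Omega>0$, so $g$ and $\eta$ have the same causal curves. Hence the causal relations $\le$, $\ll$ on $M$ coincide with those of the Minkowski half-plane $\{t<-x\}$. I will take as the set of causal curves the locally Lipschitz $\eta$-causal curves, set $L_\tau(\gamma)=\int\sqrt{\Omega(\gamma)\,|\eta(\dot\gamma,\dot\gamma)|}$, and let $\tau$ be the supremum of $L_\tau$ over causal curves. The background metric $d$ will be the Euclidean one. The natural steps, in order, are as follows.

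\textbf{(i) Global hyperbolicity and causal structure.} $M$ is a causally convex open subset of Minkowski space: the boundary $t=-x$ is null, so $M=I^-(\text{line})$ is a past set. Its causal diamonds $J(p,q)$ are therefore the compact Minkowski diamonds, and causal closedness, non-total imprisonment and global hyperbolicity all follow from the Minkowski case.

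\textbf{(ii) Lower semicontinuity of $L_\tau$.} Note that $\Omega$ is lower semicontinuous: it equals $1/x^2$ on the open set $\{|t|<|x|\}$, and on the closed region $\{|t|\ge|x|\}$ one has $\Omega=1$ while nearby values are $\ge$ ... wait, $1/x^2$ can be small for large $|x|$. The fix is that near the cone $|t|=|x|$ we have $1/x^2$ close to $1/t^2$, and this is not necessarily larger than $1$. So instead I would split $M$ into the two closed regions $\bar A$ and $\bar B$ (plus the region between them), on each of which $\Omega$ is continuous. Since the interfaces are null lines, a causal curve crosses each interface at most once, and any time it spends on an interface contributes zero length. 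So $L_\tau$ is a sum of at most three integrals with continuous integrands over consecutive subintervals. Upper semicontinuity of this concave length functional under uniform convergence is the real requirement for a length space, and it follows from the standard argument applied piece by piece, because the crossing parameters converge along subsequences.

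\textbf{(iii) $\tau$ and the Lorentzian length space axioms.} Using (ii) together with the limit curve theorem, every pair of causally related points admits a maximizer. Hence $\tau=\sup L_\tau$ is finite, is continuous, and is positive exactly on $\ll$; continuity of $\tau$ is obtained from that of each smooth piece by the same crossing argument. Strong (regular) localizability then follows by taking as localizing neighbourhoods small causally convex Minkowski diamonds. There are two cases. If a diamond meets only one region, the claim is immediate from the smooth case. If it straddles an interface, I use the explicit maximizer structure: broken geodesics whose breaks lie on the null line, with a refraction law. Regularity, i.e.\ that maximizers have a definite causal character, holds because a maximizer that is timelike somewhere cannot contain a null piece: such a piece can be shortcut, exactly as in the smooth case, since each region is smooth and the interfaces are null.

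\textbf{(iv) Timelike curvature bounded above by $-1$.} Finally I verify that small timelike triangles are thinner than their comparison triangles in the model space of curvature $-1$. Each region separately has curvature $0$ (region $B$) or $-1$ (region $A$); recall that in the paper's sign convention for upper bounds, flat regions satisfy the bound $K\le -1$ in the Lorentzian sense, i.e.\ $\ge$ in Riemannian-like terms. The nontrivial case is a triangle crossing the null interface, and this is the main obstacle. My plan there is to exploit that the conformal factor jumps \emph{down} across the null line in a specific direction. This makes maximizers through the interface behave like concatenations in which $\tau$ satisfies reverse-triangle comparisons at least as strongly as in the smooth pieces. Concretely, I would compute $\tau$ explicitly from the geodesics in $A$ found above and the straight lines in $B$, and check the comparison inequality directly.
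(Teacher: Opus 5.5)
Your steps (i)--(iii) broadly follow the paper's route: the Minkowski causal structure gives global hyperbolicity, maximizers come from limit curves, and causally convex diamonds serve as localizing neighbourhoods. Step (iv), however, has a genuine gap.

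For timelike triangles meeting the interface $\{t=x\}$, which you correctly single out as the whole difficulty, you give no argument. You only state the intention to compute $\tau$ and ``check the comparison inequality directly''. Checking one-sided comparison by hand for every small triangle whose sides may bend at the interface is not a workable plan.

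Moreover, the mechanism you want to exploit does not exist. Crossing the interface at $(x,x)$ into $A$, the factor $\Omega$ passes from $1$ to $1/x^2$. That is an upward jump for $-1<x<0$ and a downward jump for $x<-1$, so the jump has no fixed direction.

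The missing idea is a Reshetnyak-type gluing theorem for Lorentzian length spaces. The paper gets the upper bound at once from the gluing lemma of Beran--Rott \cite[Lem.~4.3.3]{gluing}. $X$ is obtained by gluing, along the null line, two Lorentzian manifolds: the flat piece $\bar B$ and the anti-de Sitter piece $\bar A$. Each has curvature bounded above by $-1$ in the sense of Alexander--Bishop. In dimension two only timelike planes occur, and the condition is timelike sectional curvature $\geq -1$, which holds for $0$ and for $-1$. No monotonicity of $\Omega$ across the interface is used. Without invoking or reproving such a gluing result, the curvature claim remains unproved.

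A second, smaller gap concerns regularity. ``The null piece can be shortcut exactly as in the smooth case'' only covers maximizers whose null and timelike parts lie in the same smooth region. The nontrivial case is a maximizer from $p\in B$ to $q\in A$ that is null on one side of the interface and timelike on the other. There the corner sits exactly on the discontinuity of $g$, where smooth corner-rounding does not apply verbatim.

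To handle it, you must move the crossing point $r_s=(s,s)$ along the interface. On the null side, $\tau$ to or from $r_s$ behaves like $\sqrt{|s-s_0|}$ (e.g.\ $\tau(p,r_s)=\sqrt{(t_p-s)^2-(x_p-s)^2}$ in $B$), so it has infinite derivative at $s_0$. On the timelike side, $\tau$ is smooth where positive and changes only by $O(|s-s_0|)$. Hence the total length strictly increases. This is exactly the paper's Case 3; for the other configuration the paper instead compares with the constant-factor space of Example~\ref{ex:Mink}.

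Two minor points:
\begin{itemize}
\item $M$ contains only one interface; the line $t=-x$ is the boundary of $M$, not an interface. So there is no ``region between'', and there are at most two length integrals.
\item The pre-length space axiom is lower semicontinuity of $\tau$. You should establish it before appealing to the Kunzinger--S\"amann limit curve theorem, as the paper does via the standard argument with a curve crossing the interface once. Alternatively, argue with Arzel\`a--Ascoli in Minkowski space directly.
\end{itemize}
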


\begin{proof}
First, we need to show that the time-separation function is lower semicontinuous, which can be done as in the smooth case \cite[Lem.~4.4]{BEE}. Indeed, let $p,q \in X$. If $\tau(p,q) = 0$, there is nothing to prove, so suppose $p \ll q$.
Let $\gamma \colon [0,1] \to X$ be a causal curve from $p$ to $q$ of length $L_g(\gamma) > \tau(p,q) - \varepsilon$. Clearly we can choose $\gamma$ such that it intersects $\partial A$ in at most a single point, and by slightly shifting that point if needed, we may assume that $\gamma$ is timelike everywhere (note that despite the discontinuity of $g$, $L_g$ is of course continuous under this shift). Then there is a $\delta > 0$ such that $L_g(\gamma \vert_{[\delta,1-\delta]}) > \tau(p,q) - 2\varepsilon$. Since $p \ll \gamma(\delta)$ and $\gamma(1-\delta) \ll q$ and $\ll$ is open, we find a neighborhood $U$ of $(p,q)$ in $X \times X$ such that $\tau(\tilde p, \tilde q) \geq \tau(p,q) - 2\varepsilon$ for all $(\tilde p, \tilde q) \in U$. Thus, $X$ is a Lorentzian pre-length space.

 That $X$ is a strongly localizable Lorentzian length space then follows analogously to the proof that spacetimes with a continuous, strongly causal and causally plain metric are strongly localizable Lorentzian length spaces in \cite[Thm.\ 5.12]{KuSa}:  First note that both $g$-causal and $\leq$-causal curves always decompose into two segments, one contained in $B$ and the other in $\bar{A}$, where $g$ is smooth. Hence $g$-causal and $\leq$-causal curves still coincide. Together with additivity of both the $g$-length and the $\tau$-length this also immediately implies that \cite[Lem.\ 5.10]{KuSa} still holds and it only remains to establish strong localizability.  Strong localizability is a technical condition, but essentially means that every point has a basis of neighborhoods such that the $d$-length of causal curves in these neighborhoods is bounded and which, with the induced causal relation, have the structure of a Lorentzian length space with continuous time separation. For this we note that $(X,g)$ has the same causal structure as Minkowski, hence is globally hyperbolic. Running the usual limit curve arguments 
 or using the limit curve theorem for locally causally closed Lorentzian pre-length spaces, \cite[Thm.~3.7]{KuSa}, we see that maximizing causal curves always exist and so $\tau$ can be shown to be (globally) continuous as in \cite[Thm.\ 5.12]{KuSa}.  Again because the causal structure is that of Minkowski, every point has a basis of causally convex neighborhoods, which fulfill all the required properties (for the $d$-length bound we can again exploit that any causal curve splits into two segments, each of which is contained in a smooth spacetime).


 Regularity means that any maximizing curve between chronologically related points must be timelike (with no null segments). This is clear if both endpoints lie on the same side $A$ or $B$. Hence we deal with the case of a causal maximizer $\gamma$ with starting point $p\in B$
 and $q\in A$
 , where $p \ll q$. The curve $\gamma$ is then the concatenation of a curve $\gamma_B$ with image in $B$ and another curve $\gamma_A$ with imagine in $\overline{A}$. Both $\gamma_A$ and $\gamma_B$ need to be themselves maximizing, and therefore each of them is either everywhere timelike or everywhere null. There are thus three cases that we must rule out:
 \begin{itemize}
  \item Case 1: both $\gamma_A$ and $\gamma_B$ are null. Then $\gamma$ is null, but since $p \ll q$, it cannot be maximizing.
  \item Case 2: $\gamma_A$ 
   is null and $\gamma_B$ 
   is timelike. In this case, we compare with the LLS of Example~\ref{ex:Mink} for some $m\in \mathbb{R}$ satisfying
  \begin{equation*}
   \frac{1}{m^2} < \min\left\{ \frac{1}{x^2} \colon x \in \pi_x( I(p,q)) \right\},
  \end{equation*}
  where $\pi_x$ denotes the projection onto the $x$-coordinate. We denote the LLS of Example~\ref{ex:Mink} by $\tilde X$, the length in $X$ by $L$ and the length in $\tilde X$ by $\tilde L$. Now we conformally identify $X$ to $\tilde X$ by the identity map in the $(t,x)$-coordinates, and let $\sigma$ denote the $\tilde L$-maximizer between $p$ and $q$. We note that $\sigma $ is timelike, cf.\ Example~\ref{ex:Mink}. Then
  \begin{equation*}
   \tilde L (\sigma_A) < L (\sigma_A) \quad \text{and} \quad \tilde L (\sigma_B) = L (\sigma_B).
  \end{equation*}
  At the same time, $L(\gamma) = \tilde L(\gamma)$, given that $\gamma$ is null in $A$, and $X$ and $\tilde X$ are isometric on $B$. Then
  \begin{equation*}
   L(\gamma) = \tilde L(\gamma) \leq \tilde L (\sigma) = \tilde L(\sigma_A) + \tilde L(\sigma_B) < L(\sigma_A) + L(\sigma_B) = L(\sigma),
  \end{equation*}
  and $\gamma$ is not maximizing.
  \item Case 3: $\gamma_A$ is timelike and $\gamma_B$ is null. Let
  \begin{equation*}
   \ell(s) := \tau(p, r_s) + \tau(r_s, q),
  \end{equation*}
  where $r_s := (s,s)$. Thus the length of the maximizer $\gamma$ between $p$ and $q$ equals $\ell(s_0)$ for $s_0$ such that $r_{s_0}$ is precisely the intersection point of $\gamma$ with the separation between $A$ and $B$ (i.e.\ the null geodesic $s \mapsto (s,s)$). The case that $\gamma_B$ is null corresponds to $\tau(p,r_{s_0}) = 0$. We show that then, an infinitesimal increase in $s$ makes $\ell$ increase, contradicting the assumption that $\gamma$ maximizes. Since in the case at hand, $\tau(r_s,q) > 0$, and the time separation is smooth wherever positive (inside of the region $A$, where the metric is smooth), we conclude that $\frac{d}{ds} \vert_{s_0} \tau(r_s,q)$ is finite. On the other hand, $\frac{d}{ds} \vert_{s_0} \tau(p,r_s) = \infty$. To see this, let $p := (t,x) \in A$, so that
  \begin{equation*}
   \tau(p,r_s) = \sqrt{(t-s)^2 - (x-s)^2}.
  \end{equation*}
  But since we are assuming that $\tau(p,r_{s_0}) = 0$ (i.e. $s_0 = \frac{t+x}{2}$), it follows that the derivative explodes at $s_0$.
 \end{itemize}
So our space is regular and (strongly) localizable, which by \cite[Lemma 3.6]{CurvBoundsLLS} is equivalent to being regularly (strongly) localizable. 
 
Lastly, the fact that $X$ has curvature bounded above by $-1$ is a direct consequence of the gluing lemma for manifolds \cite[Lem.~4.3.3]{gluing}, since it is constructed by gluing two Lorentzian manifolds with curvature bounded above by $-1$ in the sense of Alexander and Bishop \cite{AlBi}.
\end{proof}

\begin{rem}
    Our example is two-dimensional, however one can always construct higher dimensional examples by taking a product with $\bR^n$. The only part of the above discussion that does not go
    through in that case is the argument showing that curvature is bounded above by $-1$. This is because curvature bounds in the sense of Alexander and Bishop  are not preserved when taking products. Specifically, when taking a product with $\bR^n$, there are spacelike planes with sectional curvature $R = 0$.\footnote{A Lorentzian manifold is said to have (sectional) curvature bounded above by $K$ in the sense of \cite{AlBi} if spacelike sectional curvatures are $\leq K$ and timelike sectional curvatures are $\geq K$.} 
\end{rem}

\begin{rem}
    In their recent work \cite{CMMsynth}, Cavalletti, Manini and Mondino define a synthetic null hypersurface to be a closed achronal subset equipped with a ``gauge" function $G$ and a measure $\mathfrak{m}$. For a smooth null hypersurface $S$, there is a canonical choice of these structures induced by the ambient spacetime (up to a natural equivalence relation of measures and gauges, and provided that $S$ admits a global section, see also \cite{CMMsmooth}). For a closed achronal subset in a general Lorentzian length space, however, it is not possible to directly adapt the same construction. In fact, in our Example~\ref{ex:AdS}, there seems to be no such canonical choice: One can construct two non-equivalent pairs of gauge and measure on $\{ t=x \}$ by viewing it as a smooth null hypersurface in Minkowski spacetime $(\bR^2,-dt^2+dx^2)$ or in anti de Sitter spacetime in half space coordinates $\left(\bR \times (-\infty,0),\frac{1}{x^2}(-dt^2+dx^2)\right)$. Notice that the null energy condition in the sense of \cite{CMMsynth} is satisfied for both choices, because it is satisfied (in the usual sense) in both ambient spacetimes.
\end{rem}

As mentioned, Lemma \ref{lem:XisLLS} shows that Example \ref{ex:AdS} is a reasonable Lorentzian length space. However, our space $X$ does not have a lower curvature bound by $0$ in the sense of one-sided timelike triangle comparison \cite[Def 3.2]{CurvBoundsLLS}. This is in spite of the fact that the regions $A$ and $B$ on their own do satisfy such a bound, since they are isometric to patches of AdS and Minkowski spacetime, respectively. This (together with performing analogous computations for other explicit example triangles and  potential weaker lower curvature bounds) leads us to strongly  suspect that this space will not have any lower curvature bound at all, though we do not have a proof at the moment.

\begin{rem}[No lower curvature bound by $0$]
To prove that our space $X$ does not have a lower curvature bound by $0$ in the sense of one-sided timelike triangle comparison, we construct a timelike triangle which fails to achieve the required bounds. Any such triangle cannot have all its vertices in either $\bar{B}$ or $\bar{A}$. The simplest choice is therefore to have one vertex in $B$, one on the boundary and one in $A$, as depicted in Figure \ref{fig:ads_mink}.

Let us turn to the details: In anti-de Sitter space, seen as the hyperboloid embedded in $\bR ^3_2$, using \cite[Eqn.~(2.7)]{CRHK17} together with \cite[Lem. 4.24]{Oneill} (as in the proof of \cite[Lem.~4.27]{CurvBoundsLLS}), we have that for causally related points $v,w$ the Lorentzian distance is $\bar\tau (v,w) = \arccos \left(- \langle v,w \rangle \right)$. For our model, we have the isometric embedding $\phi: \{|t| < |x| \} \to AdS_2 \subset \bR ^3_2$ given by
\begin{equation*}
    X_1 = \frac{1}{2x} (1 +x^2 - t^2), \quad X_2= \frac{t}{x}, \quad X_3 = \frac{1}{2x}(1-x^2 + t^2).
\end{equation*}
Thus for any $p = (t_p, x_p) \ll q = (t_q, x_q)$ in $\{ |t| \leq |x|\}$, the Lorentzian distance is
\begin{equation*}
    \tau (p,q) = \arccos\left( \frac{x_q^2 + x_p^2 - (t_q - t_p)^2}{2x_p x_q} \right).
\end{equation*}

Solving the geodesic equation in this region, assuming that timelike geodesics are parametrized by arclength, we get that in the region $A = \{|t| < |x|\}$, any timelike geodesic $\gamma (s) = \left( t(s) , x(s) \right)$ is given by
\begin{equation*}
    t(s) = \frac{1}{E} \tan (s - s_0)+t_0, \qquad x(s) = -\frac{1}{E} \sec(s - s_0),
\end{equation*}
where $- E = g (\partial_t, \dot \gamma)$ for future geodesics since $\partial_t$ is Killing, with $E, s_0 > 0$ and $t_0$ constants. 
So for any $q = (t_q , x_q)$, $r = (t_r, x_r) \in \{|t| \leq |x|\}$ we have that the timelike geodesic $\gamma_{qr}:[0,\tau (q,r)] \to M$ from $q$ to $r$ satisfies:
\begin{equation*}
    t_0 = \frac{(t_r^2 - t_q^2) - (x_r^2 - x_q^2 )}{2(t_r-t_q)}, \quad E = \frac{1}{\sqrt{x_q^2 - (t_q - t_0)^2}}, \quad \text{and} \quad s_0 = \arccos\left( -\frac{1}{Ex_q} \right).
\end{equation*}

In order to compute the Lorentzian distance from $p \in B = \{t < x \}$ to $r \in A = \{|t|<|x|\}$ we need to find a maximizing curve between the two points. This curve will cross the null pregeodesic $\{t = x\}$ at a point $(x,x)$. Thus the length of such curve can be computed by maximizing, using elementary calculus, the length function
\begin{equation*}
\ell_{pr} (x) = \sqrt{(x - t_p)^2 - (x - x_p)^2} + \arccos\left( \frac{x_r^2 + x^2 - (t_r - x)^2}{2x x_r} \right).
\end{equation*}
For the triangle $p = (-3/2,-1), q= (-3/4, - 3/4), r= (0,-1)$ we have that
\begin{equation*}
    \tau (p,q)  = \frac{\sqrt{2}}{2}, \quad \tau (q,r) = \arccos(2/3), \quad \tau (p,r) \approx 1.5749869992354. 
\end{equation*}
If $m = \gamma_{qr}(s_r/2)$ is the midpoint of the side $[q,r]$ then 
\begin{equation*}
\tau (q, m) = \tau (m, r) = \frac{\arccos(2/3)}{2} , \quad \text{and} \quad \tau(p,m) \approx 1.1482901642771.
\end{equation*}
\begin{figure}[h]\label{fig:triangle}
    \centering
    \includegraphics[width=0.5\linewidth]{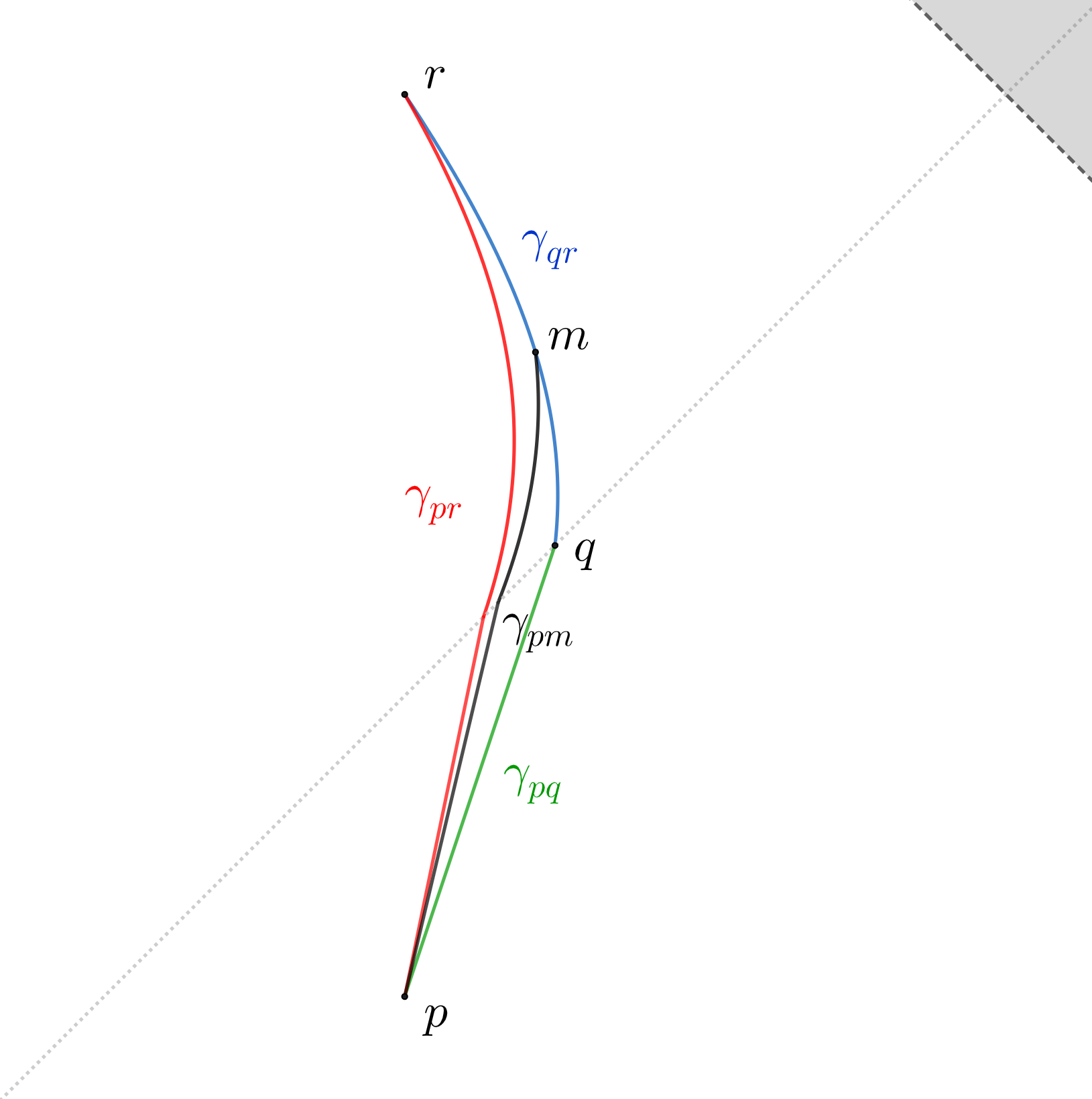}
    \caption{The timelike triangle $\Delta pqr$ in Example \ref{ex:AdS} which does not satisfy the bounds for $(\geq 0)$ curvature via one-sided triangle comparison.}
    \label{fig:ads_mink}
\end{figure}

\noindent Using \cite[Lemma 2.8 (1)]{Beran2023angles} we can compute the one-sided comparison situation for $m$ to see if we have a lower curvature bound by $K = 0$. We obtain 
\begin{equation*}
    \bar \tau(\bar p, \bar m)^2 = \frac{bd^2 + a^2 c}{b + c} - bc
\end{equation*}
where $a = \tau(p,q), b = \tau (q,m) = \tau (m,r)=c, d = \tau (p,r)$. Thus 
\begin{equation*}
    \bar\tau (\bar p, \bar m) \approx 1.1460
    < 1.1482
    \approx \tau(p,m),
\end{equation*}
which shows that this triangle does not satisfy the one-sided triangle comparison for~$(\geq 0)$-curvature bounds (see \cite[Def 3.2]{CurvBoundsLLS}). 
\end{rem}

\subsection{Further simple examples for gluing along achronal boundaries} 

In this section, we provide some additional examples of Lorentzian length spaces obtained by adding a discontinuous conformal factor on Minkowski spacetime. Note that on a smooth spacetime, the metric tensor plays the role of derivative of the length functional. Hence a discontinuous metric tensor is not necessarily pathological from the point of view of Lorentzian length spaces, where not even the existence of derivatives is assumed. Conformal transformations of Lorentzian length spaces have already been studied more in general by Manzano et al \cite{MMSZ}, but assuming continuity of the conformal factor. 
Example~\ref{ex:Mink} is showing that, although the conformal factor is discontinuous, the space that the gluing produces is essentially the Minkowski plane. Example~\ref{ex:wedge} shows that if the achronal boundary is not smooth, gluing along this boundary can mess up curvature bounds because it produces multiple maximizing curves, even though the conformal factor is constant on each region.


\begin{exam}[Gluing does not necessarily produce a new space and can preserve both upper and lower curvature bounds] \label{ex:Mink}
	Let $M := \{(t,x) : t < - x\}$ and, similar to Example \ref{ex:AdS},  choose $g := \Omega (-dt^2 + dx^2)$, with a discontinuous conformal factor, but now constant in each region, i.e.,
	\begin{equation*}
		\Omega := \begin{cases}
			1 &\text{if } \vert t \vert \geq \vert x \vert, \\
			\frac{1}{m^2} &\text{if } \vert t \vert < \vert x \vert.
		\end{cases}
	\end{equation*}
	for $m > 0$. Just as in Example \ref{ex:AdS}, $g$ induces the structure of a Lorentzian length space.
	
	
	
    We will see that this space has the same structure as the Minkowski plane. If we consider null coordinates $(u = \frac{t+x}{\sqrt2},v=\frac{t-x}{\sqrt2})$ then $M = \{(u,v) \mid u < 0\}$ and the metric is written as $g:= \Omega du dv$, where 
    \begin{equation*}
		\Omega := \begin{cases}
			1 &\text{if } v \leq 0, \\
			\frac{1}{m^2} &\text{if } v > 0.
		\end{cases}
	\end{equation*}
    In that case, it is easy to see that the map $F: (M, g) \to (M, g_0)$, given by
    \begin{equation*}
        F(u,v) := \begin{cases}
			(u,v) &\text{if } v \leq 0, \\
			(u,\frac{1}{m^2}v) &\text{if } v > 0.
		\end{cases}
    \end{equation*}
    where $g_0 = dudv$ is the usual metric in $\bR^{1,1}$, is a $\tau$-preserving and a $\leq$-preserving map (in the sense of \cite[Definition 3.2.5]{gluing}). So our ''new'' Lorentzian length space is just the Minkowski plane again. 
    In particular, maximizers between timelike related points exist, are unique and are timelike, and 
    given any timelike triangle $x \ll y \ll z$ and a point $p$ on one side of the triangle with opposite vertex $v \in \{x, y, z\}$ we have that 
    \[ \tau (p,v) = \bar \tau_0 (\bar p, \bar v), \quad \text{and} \quad \tau (v,p) = \bar\tau_0 (\bar v, \bar p),\]
    which, by one sided triangle comparison \cite[Def 3.2 and Prop 3.3]{CurvBoundsLLS}, implies that $M$ has timelike curvature bounded above and below by $K = 0$.
\end{exam}

\begin{exam}[Gluing does not have to preserve upper curvature bounds] \label{ex:wedge} 
Take $M = \bR^{1,1}$ with $(t,x)$ coordinates and note that $\Sigma = \{t = |x|\}$ determines a achronal boundary consisting of two lightlike rays and divides $\bR^2$ into two regions. Consider the metric $g := \Omega (-dt^2 + dx^2)$, with discontinuous conformal factor
\begin{equation*}
    \Omega := \begin{cases}
			1 &\text{if } t \leq |x|, \\
            c^2 &\text{if } t > |x|,
		\end{cases}
\end{equation*}
where $c > 0$ is a constant. 

\begin{figure}[ht]
		\centering
		\begin{tikzpicture}[thick,scale=1]
            \draw[dashed, gray] (0,-2.6) -- (0, 1.6);
			\draw[dashed, red] (-1.5,1.5) -- (0,0); 
            \draw[dashed, red] (0,0) --  (1.5,1.5); 
			
			\draw[-, blue] (0,-2.5) -- node[right]{$\gamma$} (0.65, 0.65); 
			\draw[-, teal] (0,-2.5) -- node[left]{$\gamma^-$} (-0.65,0.65); 
	       	\draw[-, blue] (0.65,0.65) -- (0,1.6); 
            \draw[-, teal] (-0.65,0.65) -- (0,1.6);
            
			\draw (0,-2.65) node[below]{$p$};
            \draw (0, 1.7) node[above]{$r$};
			\draw (0.7, 0.7) node[right]{$q$};
            \draw (-0.7, 0.7) node[left]{$q^-$};
            \draw (1.5, 1.6) node[right]{$\{t = |x|\}$};
		\end{tikzpicture}
		\caption{Two maximizing timelike geodesics $\gamma$ and $\gamma^-$ from $p$ to $r$ with $x_p = x_r = 0$ crossing the interface $\{ t = |x|\}$ at points $q$ and $q^-$, respectively.} \label{comparison_triangles}
	\end{figure}

Given any $p \ll r$ with $x_p = x_r = 0$ and $t_p < 0 < t_r$, we have that the Lorentzian distance from $p$ to $r$ will be achieved by a curve $\gamma$ which is the concatenation of a geodesic contained in $\{t < |x|\}$ and one contained in $\{t > |x|\}$. Thus, its length is given by
\begin{equation*}
    \ell (x) = \sqrt{(|x| - t_p)^2 - x^2} + c \sqrt{(t_r - |x|)^2 - x^2},
\end{equation*}
where $q := (|x|, x)$ is the point where $\gamma$ intersects $\Sigma$. Note that if a maximizer $\gamma$ from $p$ to $r$ intersects $\{t = |x|\}$ at a point $q = (|x|, x)$, then we have another distance maximizer $\gamma^-$ of equal length that intersects $\Sigma$ at $q^-=(|x|, -x)$.  Such maximizers are depicted in Figure \ref{comparison_triangles} and can also be obtained from noting that $\gamma$ is also maximizing between $p$ and $r$ in the Lorentzian length space from Example \ref{ex:Mink} for $m=\frac{1}{c}$, where one can exploit the $\tau$-preserving map $F$. 
Thus $(M,g)$ does not have $(\leq 0)$-timelike curvature bounds in the sense of \cite[Def 3.2]{CurvBoundsLLS} by  \cite[Theorem~4.7]{Beran2025patchwork}. 
\end{exam}

\section{Re-examining the Penrose singularity theorem} \label{sec:timelikepenrose}

As alluded to in the introduction, one reason why the question of parametrization for null geodesics is so fundamental lies in the Penrose singularity theorem. Given the difficulties of this problem in Lorentzian length spaces outlined in the previous section, we will now take a closer look at the smooth case and see whether some version of a Penrose type theorem can be established while relying as little as possible on direct reasoning with null geodesics.

\subsection{Proof via approximation and McCann's null energy condition}

In cite \cite{McCnull}, McCann defines a synthetic null energy condition for metric-measure spacetimes by requiring a synthetic timelike lower curvature bound on each compact set. Translated to spacetimes (of sufficient regularity to define the Ricci tensor), it means the following:

\begin{defn}
\label{def:NEC}
    A spacetime $(M,g)$ satisfies the \emph{null energy condition} if for every compact set $K \subset M$, there is a constant $C_K \in \bR$ such that
   \begin{equation*}
       \Ric (v,v) \geq C_K g(v,v) \quad \text{for all $p \in K$ and all timelike } v \in T_pM 
   \end{equation*} 
\end{defn}

\noindent In the same paper, it was shown that this is equivalent to the usual null energy condition, which states that $\Ric(v,v) \geq 0$ for all $v \in TM$ null (see \cite[Cor.~27]{McCnull}).

In this section, we prove the classical Penrose singularity theorem using Def.~\ref{def:NEC}, without passing 
to the usual formulation of the null energy condition via the equivalence mentioned above. For this, we need a version of Proposition~\ref{prop:approx} where we do not fix the initial point, but instead an initial submanifold, which in the case of interest will be a trapped surface. Recall also that a trapped surface can be defined as a compact $(N-1)$-dimensional submanifold $\trap \in M$ with everywhere past-directed timelike mean curvature vector $H$ \cite[Def.~14.57]{Oneill}.

\begin{lem} \label{prop:approxtrap}
    Let $\trap \subset M$ be a compact submanifold, and fix $b > 0$. Suppose that $q \in J^+(\trap)\setminus I^+(\trap)$ and that there is a unique affinely parametrized null geodesic $\gamma \colon [0,b] \to M$ from $\trap$ to $q$. Then there exists a sequence $\gamma_n \colon [0,b] \to M$ of affinely parametrized timelike geodesics maximizing the distance to $\trap$ that converges $C^1$-uniformly to $\gamma$.
\end{lem}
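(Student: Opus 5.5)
The plan is to construct the approximating sequence by the same endpoint-perturbation idea as in Proposition~\ref{prop:approx}, but now in the smooth setting (we are in a smooth spacetime, implicitly globally hyperbolic as in the Penrose theorem). This lets us upgrade convergence to $C^1$ using well-posedness of the geodesic equation.

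\textbf{Step 1: approximating endpoints and maximizers.} Choose points $q_n \in I^+(q)$ with $q_n \to q$; by push-up, $q_n \in I^+(\trap)$. Global hyperbolicity and compactness of $\trap$ give compactness of $J^+(\trap)\cap J^-(q_0)$ and continuity of $\tau(\trap,\cdot)$. Hence for each $n$ there is a maximizing causal geodesic from $\trap$ to $q_n$. It is timelike since $\tau(\trap,q_n)>0$, it is orthogonal to $\trap$ at its initial point $p_n$ (first variation), and it is a smooth solution of the geodesic equation. Parametrize it affinely on $[0,b]$; call it $\gamma_n$. By construction $\gamma_n$ maximizes the distance to $\trap$.

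\textbf{Step 2: control of initial data.} Write $\gamma_n(s)=\exp_{p_n}(s v_n)$ with $v_n\in T_{p_n}M$ normal to $\trap$ and future timelike. Passing to a subsequence, $p_n\to p\in\trap$ by compactness. The key claim is that the $v_n$ stay in a compact subset of the normal bundle, i.e.\ they neither blow up nor collapse to zero. This is the main obstacle, since affine parametrizations of timelike geodesics are not controlled by $d$-Lipschitz bounds. The approach is to fix an auxiliary complete Riemannian metric and normalize $\hat v_n=v_n/|v_n|$. A subsequence gives $\hat v_n\to\hat v$, a future causal normal vector at $p$. By the limit curve theorem, the unparametrized curves $\gamma_n$ converge to a causal curve from $\trap$ to $q$. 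It is maximizing with $\tau(\trap,q)=0$, hence a null geodesic, and by uniqueness it equals $\gamma$ up to parametrization. In particular $\hat v$ is null and is a multiple of $\dot\gamma(0)$, since $C^0$ convergence of geodesics plus convergence of the initial directions forces the limit geodesic $s\mapsto\exp_p(s\hat v)$ to trace $\gamma$. Now write $|v_n|=\lambda_n$ and compare $\exp_{p_n}(s\lambda_n\hat v_n)$ at $s=b$ with $q_n\to q=\exp_p(b\,\dot\gamma(0))$. If $\lambda_n\to\infty$ or $\lambda_n\to 0$ along a subsequence, continuity of the exponential map on its (open) domain, together with the fact that $\gamma$ has no self-intersections (it is achronal and injective) and is defined on $[0,b]$, forces $\gamma_n(b)$ to converge to the point of $\gamma$ at parameter $\lim b\lambda_n/|\dot\gamma(0)|\neq b$, or to leave every compact set, or to tend to $p$. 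Each of these contradicts $q_n\to q$. Hence $\lambda_n\to |\dot\gamma(0)|$ and $v_n\to\dot\gamma(0)$. Some care is needed because the domain of $\exp$ is only open; we use that $b\dot\gamma(0)$ lies in it and argue on a neighborhood of this vector.

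\textbf{Step 3: $C^1$ convergence.} Once $(p_n,v_n)\to(p,\dot\gamma(0))$ in $TM$, smooth dependence of solutions of the geodesic ODE on initial data gives $\gamma_n\to\gamma$ in $C^1([0,b])$ (indeed $C^\infty$). Uniqueness of $\gamma$ shows that every subsequence has a further subsequence with this limit, so the whole sequence converges.
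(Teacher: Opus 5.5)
Your proof is correct in substance, but it normalizes the approximating geodesics differently from the paper.

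\textbf{The paper's route.} It fixes the speed, $\Vert\dot\gamma_n(0)\Vert_h=\Vert\dot\gamma(0)\Vert_h$, and lets the parameter interval $[0,b_n]$ vary. Precompactness of the initial data is then immediate. The ODE limit $\sigma$ is shown to reach $q$ by non-total imprisonment in $J^+(\sigma(0))\cap J^-(q_0)$, and uniqueness together with the speed normalization gives $\sigma=\gamma$. The paper then asserts $b_n\to b$ in one line and prolongs or shortens the $\gamma_n$ to $[0,b]$.

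\textbf{Your route.} You fix the interval $[0,b]$, so $\gamma_n(b)=q_n$ exactly, and normalize only the direction. You identify the limiting image with the limit curve theorem and then control the speed $\lambda_n$. The two normalizations are related by $v_n=(b_n/b)\,\dot\gamma_n(0)$, so your case analysis on $\lambda_n$ is precisely the justification the paper leaves implicit in ``$b_n\to b$''.

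\textbf{What each buys.} Your $\gamma_n$ maximize the distance to $\trap$ on all of $[0,b]$ automatically. The paper's prolongation step does not guarantee this when $b_n<b$; an affine rescaling $[0,b_n]\to[0,b]$ would, and that is in effect your normalization. The paper's route obtains compactness and the limit geodesic more cheaply, using non-total imprisonment instead of the limit curve theorem.

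\textbf{Two points in your Step 2 to tighten.}
\begin{itemize}
\item \emph{Large speeds.} Consider $\lambda_n\to\infty$, or $\lambda_n\to\lambda$ with $b\lambda/|\dot\gamma(0)|$ outside the maximal domain of $\gamma$. Here the endpoint $\gamma_n(b)=q_n$ cannot ``leave every compact set'', since it converges to $q$ by construction. The contradiction must come from interior points. Extend $\gamma$ slightly past $b$ and set $s_n:=(1+\delta)b\,|\dot\gamma(0)|/\lambda_n$, which is $<b$ for large $n$. Then $s_nv_n\to(1+\delta)b\,\dot\gamma(0)$, so $\gamma_n(s_n)\to\gamma((1+\delta)b)$ by continuity of $\exp$ on its open domain. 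By causality this point is not in $\gamma([0,b])$, which contradicts the image convergence from the limit curve theorem.
\item \emph{Ordering.} Identifying $\hat v$ with a positive multiple of $\dot\gamma(0)$ needs $b\lambda_n$ bounded below, so that $\exp_{p_n}(s\hat v_n)$ lies on $\gamma_n$ for fixed small $s$. So exclude $\lambda_n\to 0$ first. That case uses only continuity of $\exp$ at the zero section, giving $\gamma_n(b)\to p=\gamma(0)\neq q$.
\end{itemize}
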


\begin{proof}
    Let $q_n$ be a sequence of points converging to $q$ and choose $q_0$ such that $q \ll q_{n} \ll q_0$ for all $n$. Hence $q_n \in I^+(\trap)$, and by compactness of $\trap$ and global hyperbolicity, for each $n$ there is a timelike geodesic $\gamma_n \colon [0,b_n] \to M$ which realizes the Lorentzian distance between $\trap$ and $q_n$. Let $V_n := \dot\gamma_n(0)$ and fix an auxiliary Riemannian metric $h$. We may affinely parametrize the $\gamma_n$ such that $\Vert V_n \Vert_h = \Vert \dot\gamma(0) \Vert_h$. Moreover, by compactness of $\trap$ and of the $h$-balls in each tangent space, the sequence $(V_n)_n$ must have a convergent subsequence $(V_{n_k})_k$. Well-posedness of the IVP for the geodesic equation implies that the curves $\gamma_{n_k}$ converge $C^1$-uniformly to some limit causal geodesic $\sigma$ from $\trap$ to $q$. Note that $\sigma$ must extend all the way to $q$, because otherwise $\sigma$ would be inextendible and contained in the compact set $J^+(\sigma(0)) \cap J^-(q_0)$, a contradiction to non-total imprisonment. But then the images of $\sigma$ and $\gamma$ coincide, since we assumed that $\gamma$ is the unique causal geodesic from $\trap$ to $q$. Moreover, the parametrizations also coincide by our choice of normalization of the $V_n$, so $\gamma = \sigma$. It also follows that, in fact, the original sequence $\gamma_n$ converges to $\gamma$. Finally, we conclude that $b_n \to b$. Slightly prolonging or shortening the $\gamma_n$ as needed (which is possible, at least, for all $n$ large enough), we obtain a sequence $\gamma_n \colon [0,b] \to M$.
    \end{proof}

\begin{thm}[Penrose theorem] \label{thm:Pen}
    Let $(M,g)$ be a $(N+1)$-dimensional globally hyperbolic spacetime with non-compact Cauchy surfaces that satisfies Definition~\ref{def:NEC}. If $\trap$ is a trapped surface, then there is at least one future-incomplete null geodesic starting from $\trap$.
\end{thm}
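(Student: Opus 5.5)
The plan is to follow the classical structure of Penrose's argument, but to replace the null focusing step (Raychaudhuri for null congruences plus the pointwise null energy condition) with a second-variation argument along the approximating \emph{timelike} geodesics of Lemma~\ref{prop:approxtrap}. The only curvature input is Definition~\ref{def:NEC}.

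\textbf{Setup and reduction.} Argue by contradiction and assume every future-directed null geodesic normal to $\trap$ is future complete. Normalize the future null normals $v$ of $\trap$ with respect to an auxiliary Riemannian metric $h$, say $\Vert v\Vert_h=1$. By compactness of $\trap$ and since $H$ is past timelike, there is $c>0$ with $g(H,v)\le -c$ for all such normalized $v$. Set $b_0:=(N-1)/c$.

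The core claim is: no null geodesic $\gamma$ from $\trap$ with $\dot\gamma(0)$ so normalized stays in $\partial J^+(\trap)=J^+(\trap)\setminus I^+(\trap)$ on $[0,b]$ for $b>b_0$. Granting this, $\partial J^+(\trap)$ is contained in the image under the normal exponential map of the compact set $\{s v : s\in[0,b_0]\}$. Hence it is compact. It is also a closed achronal topological hypersurface, and projecting it along the flow of a timelike vector field onto a non-compact Cauchy surface gives the usual contradiction (a compact set mapped homeomorphically onto an open and closed subset of a connected non-compact manifold). This last part is standard and I would only sketch it.

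\textbf{Key step: focusing via timelike approximations.} Suppose $\gamma|_{[0,b]}\subset\partial J^+(\trap)$ with $b>b_0$, and pick $b_0<b'<b$. The null geodesic from $\trap$ to $q=\gamma(b')$ is then unique. Indeed, a second causal curve from $\trap$ to $q$ concatenated with $\gamma|_{[b',b]}$ would have a corner or fail to be normal, so $\gamma(b)\in I^+(\trap)$. Lemma~\ref{prop:approxtrap} therefore provides affinely parametrized timelike geodesics $\gamma_n\colon[0,b']\to M$, each maximizing distance to $\trap$, with $\gamma_n\to\gamma$ in $C^1$. Put $\varepsilon_n^2:=-g(\dot\gamma_n,\dot\gamma_n)\to0$; all $\gamma_n$ lie in a fixed compact $K$.

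Take an orthonormal frame $E_1,\dots,E_{N-1}$ of $T\trap$ at $\gamma_n(0)$, parallel transport it along $\gamma_n$, and use the variation fields $V_i=f E_i$ with $f(s)=1-s/b'$. These fields are tangent to $\trap$ at $s=0$ and vanish at $s=b'$, so they are admissible for the distance-to-$\trap$ problem. Summing the second variations gives, up to the sign convention for the index form,
\begin{equation*}
\sum_i I(V_i,V_i)\;\approx\;\frac{N-1}{b'}+g(H,\dot\gamma_n(0))-\int_0^{b'} f^2\sum_{i=1}^{N-1}\langle R(E_i,\dot\gamma_n)\dot\gamma_n,E_i\rangle\,ds .
\end{equation*}
Here $\Ric(\dot\gamma_n,\dot\gamma_n)$ equals this sum plus one extra term, coming from the unit spacelike direction $e_N$ orthogonal to $\gamma_n$ and to the $E_i$. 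I would write $e_N$ as the normalized projection of a fixed transverse null vector $\ell$. This gives $\langle R(e_N,\dot\gamma_n)\dot\gamma_n,e_N\rangle=O(\varepsilon_n^2)$, uniformly on $K$. Combined with $\Ric(\dot\gamma_n,\dot\gamma_n)\ge -C_K\varepsilon_n^2$, the curvature integral is bounded below by $-O(\varepsilon_n^2 b')$. Since $g(H,\dot\gamma_n(0))\to g(H,\dot\gamma(0))\le -c$, the sum is eventually $\le \frac{N-1}{b'}-c+o(1)<0$. Some $V_i$ then strictly increases length, contradicting maximality of $\gamma_n$.

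\textbf{Main obstacle.} The delicate point is the estimate on the missing direction $e_N$. Its normalization blows up like $g(\ell,\dot\gamma_n)/\varepsilon_n$, so I must check that the curvature term really scales like $\varepsilon_n^2$ and that the sign conventions and the boundary term $g(H,\dot\gamma_n(0))$ come out exactly as needed, uniformly in $n$. A secondary technical point is verifying the uniqueness hypothesis of Lemma~\ref{prop:approxtrap} at $\gamma(b')$; I would do this with the standard corner/non-normality argument rather than any null focusing.
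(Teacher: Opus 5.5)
This is essentially the paper's proof (same maximizers from Lemma~\ref{prop:approxtrap}, same test fields $fE_i$ with $f=1-s/b$ tangent to $\trap$, same use of Definition~\ref{def:NEC}, same final compactness argument for $\partial J^+(\trap)$), and your ``main obstacle'' is harmless: the $1/\varepsilon_n$-sized component of $e_N$ lies along $\dot\gamma_n$ and drops out by antisymmetry, so $\Riem(e_N,\dot\gamma_n,e_N,\dot\gamma_n)=\varepsilon_n^2\,g(\ell,\dot\gamma_n)^{-2}\,\Riem(\ell,\dot\gamma_n,\ell,\dot\gamma_n)=O(\varepsilon_n^2)$, where $\ell$ is the parallel-transported second null normal. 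This is exactly minus the paper's term $\Riem(E^n_0,\dot\gamma_n,E^n_0,\dot\gamma_n)-\Riem(E^n_1,\dot\gamma_n,E^n_1,\dot\gamma_n)$, which the paper simply sends to $0$ by continuity, whereas you get an explicit rate. One sign slip needs fixing: with signature $(-,+,\dots,+)$, a past-directed timelike $H$ and a future null $v$ satisfy $g(H,v)\ge c>0$, not $\le -c$, and the boundary term enters the second variation as $-g(\dot\gamma_n(0),H)$; your two sign errors cancel, so the bound $\frac{N-1}{b'}-c+o(1)<0$ stands.
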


\begin{proof}
    Let $\gamma \colon [0,b] \to M$ be a null geodesic $g$-orthogonal to $\trap$, and denote $p := \gamma(0)$. At $p$, we may choose a $g$-orthonormal basis $e_2,...,e_N$ tangent to $\trap$, and complete it to an orthonormal basis of $T_pM$ by adding a future-directed timelike vector $e_0$ and a spacelike vector $e_1$. Without loss of generality, we assume $\dot\gamma(0) = e_0 + e_1$ (this is equivalent to fixing a ``normalization" for null vectors). Moreover, we parallel transport the basis $e_0,...,e_N$ to obtain a frame field $E_0,...,E_N$ along $\gamma$. In particular, $\dot\gamma = E_0 + E_1$ along the entire curve.
    
    By Lemma~\ref{prop:approxtrap}, we can approximate $\gamma$ by maximizing timelike geodesics $\gamma_n$ starting on $\trap$ with initial tangent vectors $\dot\gamma_n(0) \to \dot\gamma(0)$. Locally, we can smoothly extend $e_0,...,e_N$ to smooth orthonormal vector fields along $\trap$ (e.g. by parallel transport in a normal neighborhood), and then parallel transport along each $\gamma_n$, obtaining frame fields $E_0^n,...,E_N^n$. By well-posedness of the IVP for parallel transport, we have that $E_j^n \to E_j$ uniformly for each $j$ as $n \to \infty$.
    
    Let $f(s) = 1-s/b$ for $s \in [0,b]$. Consider the formula for the second variation $L''^n_j$ of the length of $\gamma_n$ with $f E^n_j$ as the variation vector field,
    \begin{equation*}
        g(\dot\gamma_n(0),\dot\gamma_n(0)) L''^n_{j} = \frac{1}{b}- \int_0^b f^2 \Riem(E^n_j,\dot\gamma_n,E^n_j,\dot\gamma_n) ds - {g(\dot\gamma_n(0),\mathrm{II}(e_j,e_j))}.
    \end{equation*}
    Here $\mathrm{II}$ denotes the second fundamental form of $\trap$. Summing over $j$ we obtain

 \begin{multline*}
       I_n := g(\dot\gamma_n(0),\dot\gamma_n(0)) \sum_{j=2}^{N} L''^n_{j} = \frac{N-1}{b} - \int_0^b f^2 \Ric(\dot\gamma_n,\dot\gamma_n) ds - {g(\dot\gamma_n(0),H)}\\ - \int_0^b f^2 \left( \Riem(E^n_0,\dot\gamma_n,E^n_0,\dot\gamma_n) - \Riem(E^n_1,\dot\gamma_n,E^n_1,\dot\gamma_n) \right) ds,
    \end{multline*}
    where $H = \sum_{j=2}^N \mathrm{II}(e_j,e_j)$ is the mean curvature vector of $\trap$, which is past-directed timelike by the trappedness assumption. We want to show that $I_n < C < 0$ for all large enough $n$, hence we need to study the limiting behavior of each term. It is straightforward that
    \begin{align*}
        \lim_{n \to \infty} g(\dot\gamma_n(0),H) &= g(\dot\gamma(0),H) = g(e_0+e_1,H), &
        \lim_{n \to \infty} \dot\gamma_n &= \dot\gamma = E_0 + E_1,
    \end{align*}
    where in the second equation, the convergence is uniform in the curve parameter $s \in [0,b]$. Since we can find a compact neighborhood of $\gamma$ that will contain all $\gamma_n$'s for large enough $n$, Definition~\ref{def:NEC} implies
    \begin{equation*}
        \liminf_{n \to \infty} \Ric(\dot\gamma_n,\dot\gamma_n) \geq \lim_{n \to \infty} C g(\dot\gamma_n,\dot\gamma_n) = C g(\dot\gamma,\dot\gamma) = 0,
    \end{equation*}
    where again the convergence is uniform. Finally, we also have
   \begin{equation*}
        \lim_{n \to \infty} \Riem(E^n_0,\dot\gamma_n,E^n_0,\dot\gamma_n) = \Riem(E_0,E_1,E_0,E_1) = \lim_{n \to \infty} \Riem(E^n_1,\dot\gamma_n,E^n_1,\dot\gamma_n)
    \end{equation*}
   uniformly. It follows that
   \begin{equation*}
       \limsup_{n \to \infty} I_n = \frac{N-1}{b} - {g(e_0+e_1,H)}.
    \end{equation*}
    Because $\trap$ is trapped, $H$ is past-directed timelike. It follows that $g(e_0+e_1,H) > 0$, and hence, for $b$ large enough, $\limsup_{n \to \infty} I_n < 0$.

    Assume that $\gamma$ can be extended to a future complete null geodesic which maximizes the distance to $\trap$. Hence we may choose $b$ large enough so that $\limsup_{n \to \infty} I_n < 0$ and hence $I_n < 0$ for all large enough $n$. This is in contradiction to the fact that the $\gamma_n$ maximize the distance to $\trap$. We conclude that every complete null geodesic starting from $\trap$ stops maximizing the distance after some parameter value. The usual causality argument (which does not rely on the null energy condition) tells us that every geodesic being complete is incompatible with the assumption of a non-compact Cauchy surface (see c.f \cite[Coro 14.61 (A)]{Oneill}). Therefore, there is at least one incomplete null geodesic, concluding the proof.
\end{proof}


In the proof, we have used continuity of $\Riem$, which is only guaranteed for $g \in C^2$. It would suffice, however, to have certain bounds on those terms, depending on $H$.

\subsection{Obtaining timelike incompleteness}

Working with timelike geodesics, as above, one can also wonder if it is possible to obtain an incomplete timelike geodesic. The theorem below achieves this, provided some stronger curvature conditions are met.

\begin{thm}
    Let $(M,g)$ be a $(N+1)$-dimensional globally hyperbolic spacetime that satisfies the strong energy condition $\Ric(v,v) \geq 0$ for all timelike $v \in TM$. Furthermore, let $\trap \subset M$ be a trapped surface, with past-directed timelike mean curvature vector $H$. Suppose that there exists a Riemannian metric $h$ on $M$ such that for every future-directed timelike geodesic $\sigma$ $g$-normal to $\trap$ with $h(\dot\sigma(0),\dot\sigma(0)) = 1$ it holds that
    \begin{equation} \label{eq:initialcond}
      1+\int_0^{1} \left(1-u^2 \right) \Riem(\dot\sigma,E_1,\dot\sigma,E_1) du < g(\dot\sigma(0),H),
     \end{equation}
     provided that $\sigma$ is defined on $[0,1]$. Here $E_1$ is the parallel transport along $\sigma$ of the unique (up to sign) spacelike $g$-unit vector $g$-orthogonal to $\dot\sigma(0)$ and $\trap$. Then there is a constant $\ell >0$ such that all future-directed timelike curves starting on $\trap$ have Lorentzian $g$-length at most $\ell$. In particular, $(M,g)$ is future timelike geodesically incomplete.
\end{thm}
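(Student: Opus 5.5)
Assume, for contradiction, that for every $\ell>0$ there is a future-directed timelike curve from $\trap$ of $g$-length greater than $\ell$. By global hyperbolicity and compactness of $\trap$, for any $q\in I^+(\trap)$ there is a timelike geodesic $\sigma$ from $\trap$ to $q$ realizing $\tau(\trap,q)$. Such a $\sigma$ is $g$-normal to $\trap$ and admits no focal points before $q$. Hence it suffices to find a uniform $\ell$ such that no $\trap$-normal timelike geodesic maximizes the distance to $\trap$ beyond $g$-length $\ell$. Incompleteness then follows: any future-complete timelike geodesic from $\trap$ has unbounded length, which contradicts the uniform bound on all timelike curves.

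The main tool is the second variation of length, as in the proof of Theorem~\ref{thm:Pen}. This time we vary only in the direction $E_1$, together with the tangential directions $E_2,\dots,E_N$ adapted to $\trap$. Parametrize $\sigma$ affinely so that $h(\dot\sigma(0),\dot\sigma(0))=1$, and suppose $\sigma$ maximizes up to parameter $1$. Take variation fields $fE_j$ with $f(u)=1-u^2$ or a similar profile vanishing at $u=1$. Summing over $j=1,\dots,N$, the curvature terms with $j\ge 2$ combine with the $E_1$-term into $-\int f^2\Ric(\dot\sigma,\dot\sigma)$ plus an $E_1$-correction. This is the reason the hypothesis isolates $\Riem(\dot\sigma,E_1,\dot\sigma,E_1)$ and the strong energy condition handles the Ricci part. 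The boundary term from $\trap$ gives $-g(\dot\sigma(0),H)$, because $E_1(0)$ is normal to $\trap$ and only the tangential $E_j$ contribute to $H$. The $\int \dot f^2$ term is a normalization constant. The weight $1-u^2$ in \eqref{eq:initialcond} suggests choosing $f$ so that $\int\dot f^2$ and the boundary normalization combine to the constant $1$. I would fix that choice by matching the integrand, possibly by testing only the $E_1$ direction with $f$ and using $f\equiv$ linear on the others. Under \eqref{eq:initialcond}, the total second variation is negative. So $\sigma$ does not maximize on $[0,1]$, meaning every $\trap$-normal geodesic stops maximizing before $h$-affine parameter $1$.

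Next, turn this into a bound on $g$-length. The set of unit $h$-normal future timelike (and null) vectors over compact $\trap$ is compact. The maximizing portion of each geodesic has affine parameter less than $1$, so its $g$-length is at most $\sqrt{-g(\dot\sigma(0),\dot\sigma(0))}$. This quantity is bounded on that compact set, and the bound gives $\ell$. If $\sigma$ is not defined on all of $[0,1]$, its length is bounded by the same quantity anyway.

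I expect the main obstacle to be choosing the test function and variation fields so that the resulting index-form inequality is exactly \eqref{eq:initialcond}. The sign conventions for $\Riem$ and for the second fundamental form also have to be tracked carefully. If a single choice does not reproduce the integral with weight $1-u^2$, I would instead treat the $E_1$ direction with $f_1(u)=1-u^2$ and the tangential directions with $f(u)=1-u$, and then absorb the discrepancy using $\Ric\ge0$.
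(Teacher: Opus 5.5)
Your reduction to a uniform focal-point bound and your final compactness step are fine. The central claim fails, however. Condition \eqref{eq:initialcond} together with the strong energy condition does not force a focal point before $h$-affine parameter $1$, so no choice of test fields on $[0,1]$ can work.

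For instance, suppose $\Riem$ vanishes along $\sigma$, $N=3$, and $g(\dot\sigma(0),\mathrm{II}(e_i,e_j))=\tfrac35\delta_{ij}$ for $i,j\in\{2,3\}$. Then \eqref{eq:initialcond} reads $1<\tfrac65$. Yet the $\trap$-Jacobi fields are combinations of $(1-\tfrac35u)E_j$ ($j=2,3$) and $uE_1$, so the first focal point is at $u=\tfrac53$. Hence the index form of $\sigma|_{[0,1]}$ is definite, and no admissible field vanishing at $u=1$ makes the second variation of length positive.

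Your candidate fields have two further defects. First, $E_1(0)$ is normal to $\trap$, so the $E_1$-profile must vanish at $u=0$; thus $f_1(u)=1-u^2$ is not admissible. Second, each tangential profile must fall from $f_j(0)=1$ (needed to produce $-g(\dot\sigma(0),H)$) to $f_j(1)=0$. This costs $\int_0^1\dot f_j^2\ge 1$ apiece. So on $[0,1]$ the constant term is at least $N-1$ plus the cost of the $E_1$-field, not the $1$ appearing in \eqref{eq:initialcond}.

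The missing idea is to work on a long interval $[0,b]$ and read the weight $1-u^2$ as $\psi^2-\varphi^2$, with $\psi\equiv 1$ and $\varphi(u)=u$ on $[0,1]$. Use the fields $\varphi E_1$ and $\psi E_j$ ($j\ge 2$), where on $[1,b]$ both profiles decrease linearly from $1$ to $0$. Then:
\begin{itemize}
\item $\int_0^b\dot\varphi^2=1+\frac{1}{b-1}$, so the $1$ in \eqref{eq:initialcond} is exactly the cost of switching on the $E_1$-field;
\item $\int_0^b\dot\psi^2=\frac{1}{b-1}$;
\item only the $\psi E_j$ produce the boundary term;
\item since $\varphi=\psi$ on $[1,b]$, the curvature terms combine into a Ricci term plus an $E_1$ correction supported on $[0,1]$.
\end{itemize}
Summing gives
\[
I=1+\frac{N}{b-1}-g(\dot\sigma(0),H)+\int_0^1(1-u^2)\Riem(\dot\sigma,E_1,\dot\sigma,E_1)\,du-\int_0^b\psi^2\Ric(\dot\sigma,\dot\sigma)\,du .
\]
The last term is $\le 0$ by the strong energy condition. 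By \eqref{eq:initialcond}, the first four terms sum to a negative number once $b$ is large.

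So the focal point is obtained before parameter $b$, not $1$. Your length bound must therefore become $b\,\sup\sqrt{-g(\dot\sigma(0),\dot\sigma(0))}$ over the compact $h$-unit normal bundle, with $b$ chosen independently of $\sigma$, as in the paper's argument.
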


This theorem improves upon \cite{LeoPRD} because here assumption \eqref{eq:initialcond} is made on a relatively compact set of vectors (whose image under the exponential map is relatively compact in $M$). Thus the assumption has more accurately the character of an initial condition that only needs to be checked close to $\trap$, rather than a global assumption on $M$. As discussed in \cite{LeoPRD}, such an assumption has an interpretation in terms of the radial tidal forces being repulsive. Note also that every manifold admits many Riemannian metrics, so the existence of $h$ is not a restriction per se, but just a convenient way to formulate the existence of a relatively compact set in the normal bundle to $\trap$ with the required property \eqref{eq:initialcond}. 

\begin{proof}
    Our goal is to show that every future-directed timelike geodesic $\sigma$ normal to $\trap$ with $h(\dot\sigma(0),\dot\sigma(0)) = 1$ encounters a focal point after some finite affine parameter at most $b$ independent of $\sigma$. Then, since the $h$-unit normal bundle to $\trap$ is compact, $g(\dot\sigma(0),\dot\sigma(0))$ and hence $L_g(\sigma \vert_{[0,b]})$ are uniformly bounded above. Notice that by global hyperbolicity and compactness of $\trap$, for every $p \in I^+(\trap)$, there exists a geodesic realizing the Lorentzian distance $d(\trap,p)$. This distance is thus bounded, implying the claim of the theorem.

    To show the existence of a suitable affine parameter, let
    \begin{align*}
     \varphi(u) &:= \begin{cases} u &\text{for } u \in [0,1], \\ 1-\frac{u-1}{b-1} &\text{for } u \in (1,b],\end{cases}\\
     \psi(u) &:= \begin{cases} 1 &\text{for } u \in [0,1], \\ 1-\frac{u-1}{b-1} &\text{for } u \in (1,b].\end{cases}
    \end{align*}
    The second variation of the length of $\sigma$ with respect to $\varphi E_1$ is given by
    \begin{equation*}
        g(\dot\sigma(0),\dot\sigma(0)) L''_{1} = 1+\frac{1}{b-1}- \int_0^b \varphi^2 \Riem(E_1,\dot\sigma,E_1,\dot\sigma) du - {g(\dot\sigma(0),\mathrm{II}(e_1,e_1))}.
    \end{equation*}
    Similarly, for variation vector field $\psi E_j$, $j=2,...,N$, we obtain
    \begin{equation*}
        g(\dot\sigma(0),\dot\sigma(0)) L''_{j} = \frac{1}{b-1}- \int_0^b \psi^2 \Riem(E_j,\dot\sigma,E_j,\dot\sigma) du - {g(\dot\sigma(0),\mathrm{II}(e_j,e_j))}.
    \end{equation*}
    Summing up, we obtain
    \begin{align*}
        I &:= g(\dot\sigma(0),\dot\sigma(0)) \sum_{j=1}^N L''_{j} \\
        &= 1+ \frac{N}{b-1} - g(\dot\sigma(0),H) + \int_0^{1} \left(\psi^2-\varphi^2 \right) \Riem(\dot\sigma,E_1,\dot\sigma,E_1) du - \int_0^{b} \psi^2 \Ric(\dot\sigma,\dot\sigma) du \\
        &= 1+ \frac{N}{b-1} - g(\dot\sigma(0),H) + \int_0^{1} \left(1-u^2 \right) \Riem(\dot\sigma,E_1,\dot\sigma,E_1) du - \int_0^{b} \psi^2 \Ric(\dot\sigma,\dot\sigma) du.
    \end{align*}
    Combining the SEC and \eqref{eq:initialcond}, we deduce that for $b>1$ large enough, $I<0$, implying the existence of a focal point (notice that $g(\dot\sigma(0),\dot\sigma(0)) < 0$, so the sign of $I$ is the opposite of 
    the sign of the second variation of the length). 
\end{proof}

\section*{Acknowledgements}

We thank the organizers and participants of the ``Kick-off workshop: A new geometry for Einstein’s theory of relativity and beyond” held at the University of Vienna, where an early version of this research was discussed, and Jiří Podolský in particular for pointing out that one half of Example~\ref{ex:AdS} is isometric to a region in $AdS$.

MG acknowledges support by the Deutsche Forschungsgemeinschaft (DFG, German Research Foundation) under Germany’s Excellence Strategy -- EXC 2121 ''Quantum Universe'' -- 390833306. SB acknowledges the support by the IMAG-María de Maeztu grant CEX2020-001105-MCIN/AEI/10.13039/501100011033 as well as the project PID2024-156031NB-I00 funded by MICIU/AEI/10.13039/{501100011033/ERDF/EU. 
This work was funded in part by the Austrian Science Fund (FWF) grant 10.55776/EFP6. For open access purposes, the authors have applied a CC-BY public copyright license to any author accepted manuscript version arising from this submission. 

\bibliographystyle{abbrv}
\bibliography{refs}

@article{LangeLytchakSaemann,
 author = {Lange, Christian and Lytchak, Alexander and Sämann, Clemens},
 title = {Lorentz meets Lipschitz},
 fjournal = {Advances in Theoretical and Mathematical Physics},
 journal = {Adv. Theor. Math. Phys.},
 issn = {8},
 volume = {25},
 pages = {2141--2170},
 year = {2021}
}

@article{Beran2025patchwork,
 author = {Beran, Tobias and Napper, Lewis and Rott, Felix},
 title = {Alexandrov's patchwork and the {Bonnet}-{Myers} theorem for {Lorentzian} length spaces},
 fjournal = {Transactions of the American Mathematical Society},
 journal = {Trans. Am. Math. Soc.},
 issn = {0002-9947},
 volume = {378},
 number = {4},
 pages = {2713--2743},
 year = {2025},
 language = {English},
 doi = {10.1090/tran/9372},
 zbMATH = {8013998},
 Zbl = {1564.53065}
}

@article {AdS,
    AUTHOR = {Ball\'on Bayona, C. A. and Braga, Nelson R. F.},
     TITLE = {Anti-de {S}itter boundary in {P}oincar\'e{} coordinates},
   JOURNAL = {Gen. Relativity Gravitation},
  FJOURNAL = {General Relativity and Gravitation},
    VOLUME = {39},
      YEAR = {2007},
    NUMBER = {9},
     PAGES = {1367--1379},
      ISSN = {0001-7701,1572-9532},
   MRCLASS = {83C15},
  MRNUMBER = {2329041},
       DOI = {10.1007/s10714-007-0446-y},
       URL = {https://doi.org/10.1007/s10714-007-0446-y},
}

@article{Beran2023angles,
 author = {Beran, Tobias and S{\"a}mann, Clemens},
 title = {Hyperbolic angles in {Lorentzian} length spaces and timelike curvature bounds},
 fjournal = {Journal of the London Mathematical Society. Second Series},
 journal = {J. Lond. Math. Soc., II. Ser.},
 issn = {0024-6107},
 volume = {107},
 number = {5},
 pages = {1823--1880},
 year = {2023},
 language = {English},
 doi = {10.1112/jlms.12726},
 zbMATH = {7731089},
 Zbl = {1522.53025}
}

@article{CRHK17,
  title = {Exact geodesic distances in {FLRW} spacetimes},
  author = {Cunningham, W. J. and Rideout, D. and Halverson, J. and Krioukov, D.},
  journal = {Phys. Rev. D},
  volume = {96},
  issue = {10},
  pages = {103538},
  numpages = {11},
  year = {2017},
  month = {Nov},
  publisher = {American Physical Society},
  doi = {10.1103/PhysRevD.96.103538},
  url = {https://link.aps.org/doi/10.1103/PhysRevD.96.103538}
}

@misc{MMSZ,
      title={Conformal transformations of metric spaces and Lorentzian pre-length spaces}, 
      author={Miguel Manzano and Karim Mosani and Clemens Sämann and Omar Zoghlami},
      year={2025},
      howpublished = {Preprint arXiv:2512.05842},
      eprint={2512.05842},
      archivePrefix={arXiv},
      primaryClass={math.DG},
      url={https://arxiv.org/abs/2512.05842}, 
}

@book{Oneill,
 author = {O'Neill, B.},
 title = {Semi-Riemannian geometry. {With} applications to relativity},
 fseries = {Pure and Applied Mathematics (Academic Press)},
 series = {Pure Appl. Math., Academic Press},
 issn = {0079-8169},
 volume = {103},
 year = {1983},
 publisher = {Academic Press, New York, NY},
 language = {English},
 zbMATH = {3842680},
 Zbl = {0531.53051}
}

@article{gluing,
 author = {Beran, Tobias and Rott, Felix},
 title = {Gluing constructions for {Lorentzian} length spaces},
 fjournal = {Manuscripta Mathematica},
 journal = {Manuscr. Math.},
 issn = {0025-2611},
 volume = {173},
 number = {1-2},
 pages = {667--710},
 year = {2024},
 language = {English},
 doi = {10.1007/s00229-023-01469-4},
 zbMATH = {7785305},
 Zbl = {1543.53040}
}

@article{AlBi,
 author = {Alexander, Stephanie B. and Bishop, Richard L.},
 title = {Lorentz and semi-{Riemannian} spaces with {Alexandrov} curvature bounds},
 fjournal = {Communications in Analysis and Geometry},
 journal = {Commun. Anal. Geom.},
 issn = {1019-8385},
 volume = {16},
 number = {2},
 pages = {251--282},
 year = {2008},
 language = {English},
 doi = {10.4310/CAG.2008.v16.n2.a1},
 zbMATH = {5317396},
 Zbl = {1149.53040}
}

@book{BEE,
	author = {Beem, John K. and Ehrlich, Paul E. and Easley, Kevin L.},
	edition = {Second},
	isbn = {0-8247-9324-2},
	mrclass = {53C50 (53-02 83-02)},
	mrnumber = {1384756},
	mrreviewer = {Peter R. Law},
	publisher = {Marcel Dekker, Inc., New York},
	series = {Monographs and Textbooks in Pure and Applied Mathematics},
	title = {Global {L}orentzian geometry},
	volume = {202},
	year = {1996}
}

@article {CaMo,
    AUTHOR = {Cavalletti, Fabio and Mondino, Andrea},
     TITLE = {Optimal transport in {L}orentzian synthetic spaces, synthetic
              timelike {R}icci curvature lower bounds and applications},
   JOURNAL = {Camb. J. Math.},
  FJOURNAL = {Cambridge Journal of Mathematics},
    VOLUME = {12},
      YEAR = {2024},
    NUMBER = {2},
     PAGES = {417--534},
      ISSN = {2168-0930,2168-0949},
   MRCLASS = {53C23 (49Q22 53C50 53C80 83C75)},
  MRNUMBER = {4779676},
MRREVIEWER = {Nicola\ Gigli},
       DOI = {10.4310/cjm.2024.v12.n2.a3},
       URL = {https://doi.org/10.4310/cjm.2024.v12.n2.a3},
}

@article {CMMsmooth,
    AUTHOR = {Cavalletti, Fabio and Manini, Davide and Mondino, Andrea},
     TITLE = {Optimal transport on null hypersurfaces and the null energy
              condition},
   JOURNAL = {Comm. Math. Phys.},
  FJOURNAL = {Communications in Mathematical Physics},
    VOLUME = {406},
      YEAR = {2025},
    NUMBER = {9},
     PAGES = {Paper No. 212, 62},
      ISSN = {0010-3616,1432-0916},
   MRCLASS = {49Q22 (53C50 83C05 83C75)},
  MRNUMBER = {4940202},
       DOI = {10.1007/s00220-025-05345-y},
       URL = {https://doi.org/10.1007/s00220-025-05345-y},
}

@misc{CMMsynth,
      title={On the geometry of synthetic null hypersurfaces}, 
      author={Fabio Cavalletti and Davide Manini and Andrea Mondino},
      year={2025},
      howpublished = {Preprint arXiv:2506.04934},
      eprint={2506.04934},
      archivePrefix={arXiv},
      primaryClass={math.DG},
      url={https://arxiv.org/abs/2506.04934}, 
}

@article{CurvBoundsLLS,
	author = {Beran, Tobias and Kunzinger, Michael and Rott, Felix},
	title = {On curvature bounds in {Lorentzian} length spaces},
	fjournal = {Journal of the London Mathematical Society. Second Series},
	journal = {J. Lond. Math. Soc., II. Ser.},
	issn = {0024-6107},
	volume = {110},
	number = {2},
	pages = {41},
	note = {Id/No e12971},
	year = {2024},
	language = {English},
	doi = {10.1112/jlms.12971},
	zbMATH = {7900392},
	Zbl = {1547.53080}
}

@article {HawArea,
    AUTHOR = {Hawking, S. W.},
     TITLE = {Black holes in general relativity},
   JOURNAL = {Comm. Math. Phys.},
  FJOURNAL = {Communications in Mathematical Physics},
    VOLUME = {25},
      YEAR = {1972},
     PAGES = {152--166},
      ISSN = {0010-3616,1432-0916},
   MRCLASS = {83.53},
  MRNUMBER = {293962},
MRREVIEWER = {H.\ Rund},
       URL = {http://projecteuclid.org/euclid.cmp/1103857884},
}

@article{HawSing,
 author = {Hawking, Stephen W.},
 title = {The occurrence of singularities in cosmology},
 fjournal = {Proceedings of the Royal Society of London. Series A. Mathematical and Physical Sciences},
 journal = {Proc. R. Soc. Lond., Ser. A},
 issn = {0080-4630},
 volume = {294},
 pages = {511--521},
 year = {1966},
 language = {English},
 doi = {10.1098/rspa.1966.0221},
 zbMATH = {3226977},
 Zbl = {0139.45803}
}

@article{Kett,
 author = {Ketterer, Christian},
 title = {Characterization of the null energy condition via displacement convexity of entropy},
 fjournal = {Journal of the London Mathematical Society. Second Series},
 journal = {J. Lond. Math. Soc., II. Ser.},
 issn = {0024-6107},
 volume = {109},
 number = {1},
 pages = {24},
 note = {Id/No e12846},
 year = {2024},
 language = {English},
 doi = {10.1112/jlms.12846},
 zbMATH = {7809019},
 Zbl = {1535.83087}
}

@article {KuSa,
    AUTHOR = {Kunzinger, Michael and S\"amann, Clemens},
     TITLE = {Lorentzian length spaces},
   JOURNAL = {Ann. Global Anal. Geom.},
  FJOURNAL = {Annals of Global Analysis and Geometry},
    VOLUME = {54},
      YEAR = {2018},
    NUMBER = {3},
     PAGES = {399--447},
      ISSN = {0232-704X,1572-9060},
   MRCLASS = {53C23 (53B30 53C50 53C80)},
  MRNUMBER = {3867652},
MRREVIEWER = {Benjam\'in\ Olea},
       DOI = {10.1007/s10455-018-9633-1},
       URL = {https://doi.org/10.1007/s10455-018-9633-1},
}

@article{LeoPRD,
    author = "Garc\'\i{}a-Heveling, Leonardo",
    title = "{Radial gravitational collapse causes timelike incompleteness}",
    eprint = "2401.16297",
    archivePrefix = "arXiv",
    primaryClass = "gr-qc",
    doi = "10.1103/PhysRevD.109.084034",
    journal = "Phys. Rev. D",
    volume = "109",
    number = "8",
    pages = "084034",
    year = "2024"
}

@article {MelanieC1,
    AUTHOR = {Graf, Melanie},
     TITLE = {Singularity theorems for {$C^1$}-{L}orentzian metrics},
   JOURNAL = {Comm. Math. Phys.},
  FJOURNAL = {Communications in Mathematical Physics},
    VOLUME = {378},
      YEAR = {2020},
    NUMBER = {2},
     PAGES = {1417--1450},
      ISSN = {0010-3616,1432-0916},
   MRCLASS = {53C50 (53C20 83C75)},
  MRNUMBER = {4134950},
MRREVIEWER = {Clemens\ Saemann},
       DOI = {10.1007/s00220-020-03808-y},
       URL = {https://doi.org/10.1007/s00220-020-03808-y},
}

@article {McCnull,
    AUTHOR = {McCann, Robert J.},
     TITLE = {A synthetic null energy condition},
   JOURNAL = {Comm. Math. Phys.},
  FJOURNAL = {Communications in Mathematical Physics},
    VOLUME = {405},
      YEAR = {2024},
    NUMBER = {2},
     PAGES = {Paper No. 38, 24},
      ISSN = {0010-3616,1432-0916},
   MRCLASS = {51K10 (53C50 83C75)},
  MRNUMBER = {4703452},
MRREVIEWER = {Barry\ Minemyer},
       DOI = {10.1007/s00220-023-04908-1},
       URL = {https://doi.org/10.1007/s00220-023-04908-1},
}

@article{PenSing,
 author = {Penrose, Roger},
 title = {Gravitational collapse and space-time singularities},
 fjournal = {Physical Review Letters},
 journal = {Phys. Rev. Lett.},
 issn = {0031-9007},
 volume = {14},
 pages = {57--59},
 year = {1965},
 language = {English},
 doi = {10.1103/PhysRevLett.14.57},
 zbMATH = {3203444},
 Zbl = {0125.21206}
}

@article {SaSt,
    AUTHOR = {S\"amann, Clemens and Steinbauer, Roland},
     TITLE = {On geodesics in low regularity},
   JOURNAL = {J. Phys. Conf. Ser.},
  FJOURNAL = {Journal of Physics. Conference Series},
    VOLUME = {968},
      YEAR = {2018},
     PAGES = {012010, 14},
      ISSN = {1742-6588,1742-6596},
   MRCLASS = {53C22 (34A36 53B30 83C10)},
  MRNUMBER = {3919953},
MRREVIEWER = {Wolfgang\ Hasse},
       DOI = {10.1088/1742-6596/968/1/012010},
       URL = {https://doi.org/10.1088/1742-6596/968/1/012010},
}

\end{document}